\newcommand{\mc}{\mathcal}
\newcommand{\mb}{\mathbb}
\newcommand{\R}{\mb R}
\newcommand{\N}{\mb N}
\newcommand{\T}{\mb T}
\newcommand{\eea}{\end{align}}
\renewcommand{\epsilon}{\varepsilon}
\renewcommand{\bar}{\overline}
\renewcommand{\tilde}{\widetilde}
\newcommand{\bo}{\boldsymbol}
\renewcommand{\phi}{\varphi}
\DeclareMathOperator{\diverg}{div}
\DeclareMathOperator{\Leb}{Leb}
\DeclareMathOperator{\diam}{diam}
\renewcommand\upsilon{\theta}
\newtheorem{theorem}{Theorem}[section]
\newtheorem{corollary}[theorem]{Corollary}
\newtheorem{lemma}[theorem]{Lemma}
\newtheorem{proposition}[theorem]{Proposition}
\theoremstyle{definition}
\newtheorem{definition}[theorem]{Definition}
\theoremstyle{definition}
\newtheorem{remark}[theorem]{Remark}
\newtheorem{example}[theorem]{Example}
\newtheoremstyle{question}
{4pt}
{4pt}
{}
{}
{}
{:}
{}
{}
\newcommand{\balgorithm}{\begin{algorithm}\begin{framed}\ }
\newcommand{\ealgorithm}{\end{framed}\end{algorithm}}
\newcommand{\bd}{\begin{definition}}
\newcommand{\ed}{\end{definition}}
\newcommand{\bt}{\begin{theorem}}
\newcommand{\et}{\end{theorem}}
\newcommand{\bp}{\begin{proposition}}
\newcommand{\ep}{\end{proposition}}
\newcommand{\bc}{\begin{corollary}}
\newcommand{\ec}{\end{corollary}} 
\newcommand{\bl}{\begin{lemma}}
\newcommand{\el}{\end{lemma}}
\newcommand{\br}{\begin{remark}}
\newcommand{\er}{\end{remark}}
\DeclareMathOperator{\Lip}{Lip}
\title{Stability of Fixed Points for Nonlinear Selfconsistent Transfer Operators via  Cone Contractions}
\author[1]{Roberto Castorrini }
\author[2]{Stefano Galatolo}
\author[3]{Matteo Tanzi}
\affil[1]{Scuola Normale Superiore, roberto.castorrini@sns.it}
\affil[2]{Dipartimento di Matematica, Università di Pisa, stefano.galatolo@unipi.it}
\affil[3]{Department of Mathematics, King's College London, matteo.tanzi@kcl.ac.uk}
\begin{document}
\maketitle

\begin{abstract}
In this paper we investigate the action of self-consistent transfer operators (STOs) on Birkhoff cones and give sufficient conditions for stability of their fixed points. Our approach relies on the order preservation properties of STOs that can be established via the study of their differential. We focus on the study of STOs arising from strongly coupled maps both deterministic and noisy. Our approach allows for explicit estimates that we use to give examples of STOs with multiple stable fixed points some of which are shown to be far from the asymptotic behaviour of the corresponding system of finite coupled maps and give information only on long transients for the finite dimensional system.
\end{abstract}

\tableofcontents
\section{Introduction}

Self consistent transfer operators (STOs) are nonlinear functions acting on measures that describe the dynamic of the infinite limit of coupled maps interacting via a mean-field (see e.g.\cite{SB} {\cite{bal} \cite{ST} \cite{BLS}  \cite{G}  \cite{BK} and \cite{BUMI} for a review). The stable fixed points of an STO can be interpreted as equilibria of the system in the thermodynamic limit, thus justifying the importance of the study of their existence, stability, and stability under perturbations.  

Most available results investigate fixed points in the case of small coupling. Recently, increasing effort has been put on treating the case where the maps are strongly coupled, see, e.g. \cite{BL} \cite{CGT}. These results propose tools to study stability (or instability) of fixed points via an analysis of the STO's differentiable structure.
In most applications, the STO is only expected to be differentiable (in the Fréchet sense) if seen acting from a space endowed with a certain norm to another space endowed with a weaker norm. This makes it challenging to study iterations of such operators and \cite{CGT} presents one way to overcome these challenges.

In the current paper we put forward a different strategy to the study of stability of fixed points for STOs that uses Birkhoff cones. These are a standard tool in the study of linear transfer operators and decay of correlations of chaotic systems \cite{L}. Their use relies on  a result by G. Birkhoff \cite{B} that, loosely speaking, states that a linear mapping between cones is a contraction with respect to the Hilbert projective metric intrinsically defined on the cones, and provided that the diameter of the image is finite, the contraction is strict. 

However, STOs in presence of strong coupling are highly nonlinear objects. To study their contraction properties on cones we cannot rely on linearity, but instead we leverage their order preservation properties\footnote{A linear application between cones is also order preserving.} which are sufficient to prove contraction -- this is a standard strategy in nonlinear analysis, see for example \cite{A} and \cite{LN}. In turn, we establish order preservation by using a criterion involving the Gateaux differential of the STO. This criterion does not incur in the differentiability issue mentioned above since it does not rely on iterations of the differential; the price we pay is that we need to restrict the domain of the STO to densities with higher regularity.

An advantage of working with cones is that estimates necessary to verify the sufficient conditions for stability of the fixed points are rather explicit. This allows us to give examples of STOs with multiple stable fixed points in presence of strong coupling  where, in contrast, the  weak coupling regime would allow for only one stable fixed point. It also allows us to treat coupled maps with random noise, and give an example of an STO with a stable fixed point that is ``far" from any statistical asymptotic behavior of the finite-dimensional system; this establishes that, as expected, stable fixed points of STOs can in general show up in the finite dimensional system only as long metastable transients observed for times that can be made arbitrarily large increasing the system's size.

As an addendum, we provide a proof that, by coupling maps with noise and allowing their number to go to infinity, we can describe this thermodynamic limit as a STO. This STO is obtained by composing the push-forward of a state-dependent map (as in the noiseless case) with an integral operator whose kernel depends on the noise (see Proposition \ref{Prop:STOnoisyCoup}).

\section{Contraction of nonlinear applications between cones}
In this section we introduce a criterion for contraction of nonlinear order preserving transformation on cones that we will later apply to the STO for coupled maps. 
\begin{definition} A subset  $\mc V$ of a Banach space $V$ is a convex positive cone if the following properties are satisfied: 
\begin{itemize}
\item[i)] for every $\phi\in \mc V$ and $t> 0$, $t\phi\in \mc V$; 
\item[ii)] for every $\phi,\psi\in \mc V$, $\phi+\psi\in \mc V$;
\item[iii)] $\bar{\mc V}\cap (-\bar{\mc V})=\{0\}$.
\end{itemize}
We say that $\mc V$ is \emph{closed} if $\mc V$ is a closed set with respect to the topology on $V$.
\end{definition}
All the cones that we consider from now on will be convex positive cones.
\begin{definition}[Partial ordering on a cone] Given a cone $\mc V$, define the partial ordering
\[
\phi\le_{\mc V}\psi \quad \mbox{iff}\quad \psi-\phi\in \mc V.
\]
\end{definition}

\begin{definition}[Hilbert Metric]\label{Def:HilbMet} Let $\mc V$ be a cone, then for every $\phi,\psi\in \mc V$ define
\[
M_{\mc V}(\phi,\psi):=\inf\{\beta>0:\, \phi\le_{\mc V} \beta \psi\}\quad\quad m_{\mc V}(\phi,\psi):=\sup\{\beta>0:\, \phi\ge_{\mc V} \beta \psi\}
\]
and the Hilbert projective metric
\[
d_{\mc V}(\phi,\psi):=\log\frac{M_{\mc V}(\phi,\psi)}{m_{\mc V}(\phi,\psi)}.
\]
\end{definition}
The function $d_{\mc V}$ is a pseudometric because it only distinguishes between directions.
Below it is going to be useful the following
\begin{definition}
Given a cone $\mc V$ and  $A\subset \mc V$, we define the diameter of $A$ in $\mc V$ as 
\[
\diam_{\mc V}(A)=\sup_{\psi_1,\psi_2\in A\backslash\{0\}} d_{\mc V}(\psi_1,\psi_2).
\]
\end{definition}
Cones of functions play an important role in the study of contraction properties of transfer operators, thanks to the following result of Birkhoff:
\begin{theorem}[\cite{B}]\label{Thm:ContCones}
Assume $\mc V\subset V$ and $\mc V'\subset V'$ are two convex cones with Hilbert metrics $d_{\mc V}$ and $d_{\mc V'}$ respectively. If $\mc P: V\rightarrow  V'$ is a linear transformation such that $\mc P(\mc V)\subset \mc V'$, then
\[
d_{\mc V'}(\mc P\psi_1,\mc P\psi_2)\le [1-e^{-\diam_{\mc V'}(\mc P\mc V)}] d_{\mc V}(\psi_1,\psi_2)
\]
for all $\psi_1,\psi_2\in\mc V$.
\end{theorem}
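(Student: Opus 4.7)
The plan is to reduce the theorem to a two-step argument: first use the linearity of $\mc P$ to transport the order relations on $\mc V$ into order relations on $\mc V'$, and then extract a quantitative gain from the assumption that $\mc P\mc V$ has finite Hilbert diameter $D := \diam_{\mc V'}(\mc P\mc V)$.

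I would start by setting $m := m_{\mc V}(\psi_1,\psi_2)$ and $M := M_{\mc V}(\psi_1,\psi_2)$. If $m=0$ or $M=+\infty$ then $d_{\mc V}(\psi_1,\psi_2)=+\infty$ and the claim is trivial, so I may assume $0 < m \le M < \infty$. For any $m' < m$ and $M' > M$, the definitions give $\psi_1 - m'\psi_2 \in \mc V$ and $M'\psi_2 - \psi_1 \in \mc V$. Linearity of $\mc P$ together with $\mc P(\mc V) \subset \mc V'$ then yields $\mc P\psi_1 - m'\mc P\psi_2 \in \mc V'$ and $M'\mc P\psi_2 - \mc P\psi_1 \in \mc V'$, and since both elements belong to $\mc P\mc V$ the diameter hypothesis bounds their Hilbert distance by $D$.

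Next, I would introduce $\alpha$ and $\beta$ as the $M_{\mc V'}$ and $m_{\mc V'}$ functionals of this pair, so that $\alpha/\beta \le e^D$. Unpacking the sandwich $\beta(M'\mc P\psi_2 - \mc P\psi_1) \le_{\mc V'} \mc P\psi_1 - m'\mc P\psi_2 \le_{\mc V'} \alpha(M'\mc P\psi_2 - \mc P\psi_1)$ and rearranging gives
$$\frac{m' + \beta M'}{1 + \beta}\, \mc P\psi_2 \;\le_{\mc V'}\; \mc P\psi_1 \;\le_{\mc V'}\; \frac{m' + \alpha M'}{1 + \alpha}\, \mc P\psi_2,$$
from which I obtain bounds of the same form for $m_{\mc V'}(\mc P\psi_1,\mc P\psi_2)$ and $M_{\mc V'}(\mc P\psi_1,\mc P\psi_2)$ after letting $m' \nearrow m$ and $M' \searrow M$.

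The remaining and most technical step, which I expect to be the main obstacle, is establishing the calculus inequality
$$\frac{(m + \alpha M)(1 + \beta)}{(1 + \alpha)(m + \beta M)} \le \left(\frac{M}{m}\right)^{1 - e^{-D}}$$
subject to $0 < \beta \le \alpha$, $\alpha/\beta \le e^D$, and $0 < m \le M$. Reparametrising by $u := M/m \ge 1$ and $r := \alpha/\beta \in [1, e^D]$, I would show that the supremum of the left-hand side over the admissible region is attained when $r$ saturates to $e^D$ and $\beta$ is pushed to the critical value obtained by differentiating; a direct computation then gives the exponent $1 - e^{-D}$. Taking logarithms yields the claimed Lipschitz constant and concludes the argument.
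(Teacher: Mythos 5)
The paper does not prove this statement: it is quoted as a classical result of Birkhoff \cite{B}, in the quantitative form with coefficient $1-e^{-\diam}$ used by Liverani \cite{L}, so there is no internal proof to compare against. Your argument is essentially the standard proof and is correct in outline: linearity transports $\psi_1-m'\psi_2,\ M'\psi_2-\psi_1\in\mc V$ to a pair in $\mc P\mc V$ of Hilbert distance at most $D$, the sandwich yields $M_{\mc V'}(\mc P\psi_1,\mc P\psi_2)\le\frac{m'+\alpha M'}{1+\alpha}$ and $m_{\mc V'}(\mc P\psi_1,\mc P\psi_2)\ge\frac{m'+\beta M'}{1+\beta}$, and the uniformity of the final calculus inequality in $\alpha,\beta$ lets you pass to the limit $m'\nearrow m$, $M'\searrow M$ even though $\alpha,\beta$ depend on $m',M'$. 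Two caveats. First, the optimisation does not ``directly give the exponent $1-e^{-D}$'': setting $u=M/m$ and saturating $\alpha=\beta e^{D}$, the supremum over $\beta$ of $\frac{(1+\beta e^{D}u)(1+\beta)}{(1+\beta e^{D})(1+\beta u)}$ is attained at $\beta=(e^{D}u)^{-1/2}$ and equals $\bigl(\frac{\sqrt{e^{D}u}+1}{\sqrt{e^{D}}+\sqrt{u}}\bigr)^{2}$, which is bounded by $u^{\tanh(D/4)}$ (the sharp Birkhoff coefficient); to land on the stated constant you still need the elementary check $\tanh(D/4)=\frac{e^{D/2}-1}{e^{D/2}+1}\le\frac{(e^{D/2}-1)(e^{D/2}+1)}{e^{D}}=1-e^{-D}$. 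Your claimed inequality is true, but this step is the heart of the theorem and is only sketched. Second, the case $\diam_{\mc V'}(\mc P\mc V)=\infty$ should be split off, since then $\alpha$ and $\beta$ need not be finite and positive and the sandwich is unavailable; there the claim reduces to $d_{\mc V'}(\mc P\psi_1,\mc P\psi_2)\le d_{\mc V}(\psi_1,\psi_2)$, which already follows from the first half of your argument via $m_{\mc V'}\ge m$ and $M_{\mc V'}\le M$. Neither point is a fatal gap, but both need to be written out for a complete proof.
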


In this paper, we are going to use a generalization of the above result that substitutes the assumptions of linearity for $\mc P$ with order preservation and homogeneity (as defined below). This generalization is suitable to study the contraction properties of nonlinear STOs.
\begin{definition}
A mapping $\mc T:\mc V\rightarrow \mc V'$ between two cones is said to be \emph{homogeneous} if $\mc T(\lambda v)=\lambda \mc T(v)$ for every $\lambda>0$ and $v\in \mc V$.
\end{definition}

An analogue of the following proposition can be found for example in \cite{LN}.
\begin{proposition}\label{Thm:ContractionProperties}
Let $\mc V$, $\mc V'$ be  cones  with $\mc V'\subset\mc V$ and let $U\subset \mc V$ be a convex cone. Then if $\mc T:\mc V\rightarrow \mc V'$ is  homogeneous  and  $\mc T|_U:(U,\le_{\mc V})\rightarrow (\mc V',\le_{\mc V'})$ is order preserving, then for every $\phi,\psi\in U$
\[
d_{\mc V}(\mc T\phi,\mc T\psi)\le [1-e^{-\diam_{\mc V}(\mc V')}] d_{\mc V}(\phi,\psi).
\]
\end{proposition}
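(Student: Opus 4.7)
My plan is to transport the classical proof of Birkhoff's theorem (\ref{Thm:ContCones}) to the nonlinear setting, using homogeneity and order preservation as substitutes for linearity. The main observation is that although one cannot apply $\mc T$ to a difference, homogeneity still allows the defining Hilbert-metric sandwich to be pushed through $\mc T$, and the strict contraction then follows from the finiteness of $\diam_{\mc V}(\mc V')$.

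Assume $d_{\mc V}(\phi,\psi) < \infty$, otherwise the bound is trivial. Set $a := m_{\mc V}(\phi,\psi)$ and $b := M_{\mc V}(\phi,\psi)$, so that $0 < a \le b < \infty$ and $a\psi \le_{\mc V} \phi \le_{\mc V} b\psi$. Because $U$ is a convex cone containing $\psi$, both $a\psi$ and $b\psi$ lie in $U$, and order preservation of $\mc T|_U$ together with homogeneity ($\mc T(\lambda \psi) = \lambda \mc T\psi$) yields
\[
a\mc T\psi \le_{\mc V'} \mc T\phi \le_{\mc V'} b\mc T\psi.
\]
In particular $x := \mc T\phi - a\mc T\psi$ belongs to $\mc V'$; the case $x=0$ gives $d_{\mc V}(\mc T\phi,\mc T\psi) = 0$ and is done.

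Next I would exploit the finiteness of $\Delta := \diam_{\mc V}(\mc V')$. Since $x$ and $\mc T\psi$ both lie in $\mc V'$, there exist $0 < s \le t$ with $s \mc T\psi \le_{\mc V} x \le_{\mc V} t \mc T\psi$ and $t \le s e^{\Delta}$; moreover the upper half of the sandwich forces $t \le b-a$. Adding $a\mc T\psi$ throughout and dividing,
\[
\frac{M_{\mc V}(\mc T\phi,\mc T\psi)}{m_{\mc V}(\mc T\phi,\mc T\psi)} \le \frac{a+t}{a+s}.
\]

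The remaining step, which I expect to be the main obstacle, is extracting the factor $1-e^{-\Delta}$ from this rational bound. A direct monotonicity analysis under the constraints $s > 0$, $s \le t \le \min(s e^{\Delta}, b-a)$ shows that $(a+t)/(a+s)$ attains its maximum at $s = (b-a)e^{-\Delta}$, $t = b-a$, with value $b/(a+(b-a)e^{-\Delta})$. Finally Bernoulli's inequality $x^{\epsilon} \le 1 + \epsilon(x-1)$ for $x \ge 1$, $\epsilon \in [0,1]$, applied with $x = b/a$ and $\epsilon = e^{-\Delta}$, rearranges into
\[
\frac{b}{a+(b-a)e^{-\Delta}} \le \left(\frac{b}{a}\right)^{1 - e^{-\Delta}},
\]
and taking logarithms produces the claim. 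Every other step is a clean translation of the linear Birkhoff proof; the only real subtlety is recognising Bernoulli as the bridge between the rational bound and the exponential form demanded by the statement.
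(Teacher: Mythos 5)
Your proof is correct, and it diverges from the paper's in its second half. The first step -- using that $a\psi,b\psi\in U$ and pushing the sandwich $a\psi\le_{\mc V}\phi\le_{\mc V}b\psi$ through $\mc T$ by order preservation plus homogeneity to get $a\mc T\psi\le_{\mc V'}\mc T\phi\le_{\mc V'}b\mc T\psi$ -- is exactly the paper's first step (phrased there as $m_{\mc V'}(\mc T\phi,\mc T\psi)\ge m_{\mc V}(\phi,\psi)$ and $M_{\mc V'}(\mc T\phi,\mc T\psi)\le M_{\mc V}(\phi,\psi)$). From that point the paper simply observes that the inclusion $\iota:\mc V'\hookrightarrow\mc V$ is linear and cites Birkhoff's Theorem \ref{Thm:ContCones} to convert $d_{\mc V'}(\mc T\phi,\mc T\psi)$ into $[1-e^{-\diam_{\mc V}(\mc V')}]\,d_{\mc V'}(\mc T\phi,\mc T\psi)\le[1-e^{-\diam_{\mc V}(\mc V')}]\,d_{\mc V}(\phi,\psi)$, and is done in two lines. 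You instead re-derive the Birkhoff contraction estimate from scratch: shifting by $a\mc T\psi$, comparing $x=\mc T\phi-a\mc T\psi$ with $\mc T\psi$ inside $\mc V'$ using the finite diameter, optimising $(a+t)/(a+s)$ over the constraint region, and closing with Bernoulli's inequality $x^{\epsilon}\le 1+\epsilon(x-1)$. Your computation is sound (I checked the monotonicity in $s$ and $t$ and the Bernoulli rearrangement), and it buys self-containedness -- you never invoke Theorem \ref{Thm:ContCones} -- at the cost of reproducing its proof; the paper's factorisation through the inclusion map is the slicker observation and is the one worth internalising, since it isolates exactly where linearity is still needed.

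Two small points to tighten. First, you treat $a=m_{\mc V}(\phi,\psi)$ and $b=M_{\mc V}(\phi,\psi)$ as attained, i.e.\ you assert $a\psi\le_{\mc V}\phi\le_{\mc V}b\psi$; without closedness of $\mc V$ (not assumed in this proposition) the sup and inf need not be attained, so you should run the argument with $a-\epsilon$ and $b+\epsilon$ and pass to the limit at the end (the final bound is continuous in $a,b$, so this is harmless, and the paper's phrasing in terms of the defining sets sidesteps it). Second, you should dispose of the case $\diam_{\mc V}(\mc V')=\infty$ separately: there the claimed factor is $1$ and the bound already follows from your first sandwich together with $\mc V'\subset\mc V$, with no need for the $s,t$ comparison.
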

\begin{proof}
Since $\mc T$ is order preserving in $\mc V$ when restricted to $U$, for every $\phi,\psi\in U$ s.t. $\phi\le_{\mc V} \psi$, one has $\mc T(\phi)\le_{\mc V'} \mc T(\psi)$. Combining this with homogeneity, implies that for every $\beta>0$ and $\phi,\psi\in U$, $\beta\psi\in U$ and if $\phi\le_{\mc V} \beta \psi$ then
\[
\mc T(\phi)\le_{\mc V'} \mc T(\beta \psi)= \beta\mc T(\psi),
\]
and therefore
\[
m_{\mc V'}(\mc T(\phi),\mc T(\psi))\ge m_{\mc V}(\phi,\psi).
\]
Analogously $M_{\mc V'}(\mc T(\phi),\mc T(\psi))\le M_{\mc V}(\phi,\psi)$, and 
\[
d_{\mc V'}(\mc T(\phi),\mc T(\psi))\le d_{\mc V}(\phi,\psi).
\]       

\noindent Since $\mc V'\subset \mc V$,  Theorem \ref{Thm:ContCones} can be applied to the linear inclusion map $\iota:\mc V'\rightarrow \mc V$
\[
d_{\mc V}(\mc T\phi,\mc T\psi)\le [1-e^{-\diam_{\mc V}(\mc V')}] d_{\mc V'}(\mc T\phi,\mc T\psi)
\] and the result follows. 
\end{proof}

 To establish order preservation of a mapping between cones, it becomes handy the following criterion  (a variation of  Theorem 1.3.1 from \cite{LN}) for order preservation of a map in terms of its Gateaux differential.
\begin{proposition}\label{Thm:OrderPreservationCondition}
Consider $(Y,\|\cdot\|_Y)$ a Banach space,  $\mc V\subset Y$  a convex positive cone closed in $Y$, $U\subset \mc V$ a convex subcone, and $\mc T:\mc V\rightarrow\mc V$ a map that restricted to $U$ is  Gateaux differentiable 
with respect to the convergence in $(Y,\|\cdot\|_Y$)\footnote{More precisely, for every $\phi\in U$ and $\psi\in \mc V$, we call $ D \mc T_\phi$ the Gateaux derivative of $\mc T$ at $\phi$ evaluated at $\psi$ the limit in $(Y,\|\cdot\|_Y)$ 
\[
\lim_{h\rightarrow 0}\frac{\mc T(\phi+h\psi)-\mc T(\phi)}{h}.
\]} and such that for every $\phi,\psi\in U$ and every $\xi\in \mc V$, $[0,1]\ni t\mapsto D\mc T_{t\phi+(1-t)\psi}(\xi)$ is continuous w.r.t. $\|\cdot\|_Y$.  

If $D\mc T_\phi(\mc V)\subset \mc V$ for all $\phi\in U$, then  $\mc T|_{U}: (U,\le_{\mc V})\rightarrow (\mc V,\le_{\mc V})$ is order preserving.
\end{proposition}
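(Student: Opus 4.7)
My plan is to prove order preservation by a fundamental theorem of calculus argument along the line segment from $\phi$ to $\psi$, combined with the closedness of $\mc V$.

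Fix $\phi, \psi \in U$ with $\phi \le_{\mc V} \psi$, so that $\xi := \psi - \phi \in \mc V$. The goal is to show $\mc T(\psi) - \mc T(\phi) \in \mc V$. Define the auxiliary curve $\gamma : [0,1] \to Y$ by $\gamma(t) := \mc T((1-t)\phi + t\psi)$. Since $U$ is a convex subcone and $\phi, \psi \in U$, the argument $(1-t)\phi + t\psi$ belongs to $U$ for every $t \in [0,1]$, so $\gamma$ is well defined and $\mc T$ is Gateaux differentiable there.

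Next I would verify that $\gamma$ is differentiable as a $Y$-valued function with derivative $\gamma'(t) = D\mc T_{(1-t)\phi + t\psi}(\xi)$. This follows from the definition of the Gateaux derivative in the footnote applied with direction $\xi = \psi - \phi \in \mc V$: for small $h$ we have $(1-t-h)\phi + (t+h)\psi = \big[(1-t)\phi + t\psi\big] + h\xi$, and the Gateaux difference quotient of $\mc T$ at $(1-t)\phi + t\psi$ in direction $\xi$ is exactly $h^{-1}[\gamma(t+h) - \gamma(t)]$. The standing continuity hypothesis, applied to $\xi \in \mc V$, now ensures that $\gamma'$ is continuous from $[0,1]$ into $(Y, \|\cdot\|_Y)$. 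The hypothesis $D\mc T_\phi(\mc V) \subset \mc V$ moreover places $\gamma'(t)$ inside $\mc V$ for every $t$.

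Because $\gamma'$ is continuous on a compact interval with values in a Banach space, the Banach-valued fundamental theorem of calculus applies and gives
\[
\mc T(\psi) - \mc T(\phi) = \gamma(1) - \gamma(0) = \int_0^1 \gamma'(t)\,\diff t,
\]
where the right-hand side is a Riemann integral in $(Y, \|\cdot\|_Y)$. It remains to show that this integral belongs to $\mc V$. For any partition $0 = t_0 < t_1 < \dots < t_n = 1$, the Riemann sum $\sum_{i=0}^{n-1} (t_{i+1} - t_i)\,\gamma'(t_i)$ is a positive linear combination of elements of $\mc V$, hence lies in $\mc V$ by properties i) and ii) of a cone. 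Since $\gamma'$ is continuous, such Riemann sums converge in $Y$-norm to the integral as the mesh tends to zero; closedness of $\mc V$ in $Y$ then yields $\int_0^1 \gamma'(t)\,\diff t \in \mc V$, i.e. $\mc T(\phi) \le_{\mc V} \mc T(\psi)$.

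The main conceptual obstacle is the gap between Gateaux and Fréchet differentiability: one has to verify that along the specific straight line segment used here the Gateaux derivatives actually assemble into a bona fide $Y$-valued derivative of $\gamma$, and that this derivative is $\|\cdot\|_Y$-continuous in $t$. Both points are handled by choosing the direction $\xi \in \mc V$ (so the Gateaux derivative is defined at every intermediate point) and by invoking the hypothesis that $t \mapsto D\mc T_{t\phi + (1-t)\psi}(\xi)$ is continuous; the rest of the argument is the standard closed-cone-plus-FTC routine.
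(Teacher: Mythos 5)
Your proof is correct and follows essentially the same route as the paper's: differentiate $t\mapsto\mc T((1-t)\phi+t\psi)$ along the segment in $U$ using the Gateaux derivative in the direction $\xi=\psi-\phi\in\mc V$, apply the fundamental theorem of calculus, and conclude via the cone properties. Your explicit Riemann-sum-plus-closedness justification of the final step is a slightly more careful spelling-out of what the paper compresses into ``convexity of $\mc V$'', but it is the same argument.
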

\begin{proof}
Let $\phi,\psi\in U$ such that $\phi\le_{\mc V} \psi$. Then, by convexity of $U$, $\mc T$ is continuously Gateaux differentiable at $t\psi+(1-t)\phi\in U$ for every $t\in[0,1]$. This together with $\phi\le_{\mc V} \psi$  implies that the map $t\mapsto \mc T(t\psi+(1-t)\phi) $ is differentiable, in fact
\begin{align*}
 &\lim_{h\rightarrow 0}\frac{\mc T((t+h)\psi+(1-t-h)\phi)-\mc T(t\psi+(1-t)\phi)}{h}=\\
 &\quad =\lim_{h\rightarrow 0}\frac{\mc T(t\psi+(1-t)\phi+h(\psi-\phi))-\mc T(t\psi+(1-t)\phi)}{h}\\
 &\quad =D\mc T_{t\psi+(1-t)\phi}(\psi-\phi),
\end{align*}
where we used that $\phi\le_{\mc V} \psi$ implies that $\psi-\phi\in \mc V$.
From the above and the fundamental theorem of calculus  
\begin{align*}
\mc T(\psi)-\mc T(\phi)&=\int_0^1\frac{d}{dt}\mc T(t\psi+(1-t)\phi) dt\\
&=\int_0^1D\mc T_{t\psi+(1-t)\phi}(\psi-\phi) dt.
\end{align*}
By assumption, $D\mc T_{t\psi+(1-t)\phi}(\psi-\phi)\in \mc V$ for all $t\in[0,1]$, and the convexity of $\mc V$ implies that $\mc T(\psi)-\mc T(\phi)\in \mc V$.
\end{proof}

\section{Differential of STOs Arising in Coupled Maps}
In this section we introduce the main class of STOs that we are interested in.



\begin{definition}\label{Def:Self-ConsOp} Given  $f\in C^0(\T^n, \T^n)$, $H\in C^0(\T^n\times\T^n, \R^n)$,  $\delta\in\R$, and $\mu\in \mc M_1(\T^n)$, where $\mc M_1(\T^n)$ denotes the set of Borel probability measures on $\T^n$, define
\begin{itemize}
\item $P:\mc M_1(\T^n)\rightarrow \mc M_1(\T^n)$ to be  the transfer operator for $f$\footnote{The transfer operator of a measurable map $f$ is defined as
\[
f_*\mu(A)=\mu(f^{-1}(A))
\]
for any $A$ measurable.};
\item $g_{\mu}:\T^n\rightarrow \T^n$
\[
g_{\mu}(x)=x+\delta\int_{\T^n}H(x,y)d\mu(y)\mod 1
\]
which is the mean-field coupling map when a (finite or infinite) system of coupled maps is in the state $\mu$;
\item $ L_\mu:\mc M_1(\T^n)\rightarrow \mc M_1(\T^n)$  the (linear) transfer operator of $g_\mu$;
\item $\mc L: \mc M_1(\T^n)\rightarrow \mc M_1(\T^n)$, the STO for the coupling  defined as
\[
\mc L\mu:=L_\mu(\mu);
\]
\item $\mc T:\mc M_1(\T^n)\rightarrow \mc M_1(\T^n)$, the STO for the whole system 
\[
\mc T:=P\mc L.
\]
\end{itemize}
\end{definition}
It can be argued (see e.g. \cite{G} and \cite{ST}) that the STO $\mc T$ in the above definition describes the thermodynamic limit for $N\rightarrow \infty$ of the system of coupled maps $F:(\T^n)^N\rightarrow (\T^n)^N$ defined as:
\begin{equation}\label{Eq:DetCoupledMaps}
x_i(t+1)=F_i(x_1(t),...,x_N(t)):=f\left(x_{i}(t)+\delta \sum_{j=1}^NH(x_i(t),x_j(t)) \mod 1\right)
\end{equation}
where $x_i(t)\in \T^n$ and $i=1,...,N$.

The goal in what follows is to investigate the stability of the fixed points of $\mc T$, i.e.  the observable equilibria in the thermodynamic limit, by studying its action on convex cones of functions\footnote{The cone depends on the properties of the dynamics. For example, when dealing with coupled uniformly expanding maps, we will consider certain cones of $\log$-Lipschitz functions.}. This will lead us to   conditions on the coupling strength (size of $\delta$) that are sufficient  to ensure  stability of fixed points.


%

From now on, we work with $f\in C^3(\T^n,\T^n)$, $H\in C^3(\T^n\times\T^n,\R)$, and measures having density with respect to Lebesgue in, at least, $C^1(\T^n,\R^+)$.

First of all let's extend the  definition of  $g_\mu$   to any positive measure $\mu$ with $C^1$ density.
\begin{definition}\label{Eq:Defg}
For every  $\phi\in C^1(\T^n,\R^+)$, let $g_\phi\in C^3(\T^n,\T^n)$ be defined as
\begin{equation}\label{Eq:Expressgphi}
g_\phi(x):=x+\delta\int_{\T^n}H(x,y)\frac{\phi(y)}{\int \phi}dy\mod 1.
\end{equation}
$L_\phi$, $\mc L$, $\mc T$ are defined analogously as above, and $\int \phi$ is used as short-hand notation for the integral with respect to the Lebesgue measure.
\end{definition}

With this choice, the STOs $\mc L$ and $\mc T$ are homogeneous: $\mc L(\lambda\phi)=\lambda\mc L(\phi)$ for all $\lambda>0$ and $\phi\in C^1(\T^n,\R^+)$ allowing us to use the criterion in Proposition \ref{Thm:ContractionProperties}.

\begin{lemma}\label{Lem:Diffg}
The application $\phi\mapsto g_\phi$ defined from $C^1(\T^n,\R^+)$ to $C^3(\T^n,\T^n)$  is Gateaux differentiable (in $C^0$),  its derivative is
\begin{equation}\label{Eq:Diffg}
Dg_\phi(\psi)(x):=\lim_{s\rightarrow 0}\frac{g_{\phi+s \psi}-g_\phi }{s}(x)=\delta \frac{1}{\int \phi}\int_{\T^n}H(x,y)\left[\psi-\phi\,\frac{\int\psi}{\int\phi} \right](y)dy
\end{equation}
for every $\phi\in C^1(\T^n,\R^+)$ and $\psi\in C^1(\T^n,\R)$, and $x\mapsto Dg_\phi(\psi)(x)$ is a $C^3$ function.
\end{lemma}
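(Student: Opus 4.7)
The plan is to proceed by a direct computation on the difference quotient, followed by verification that (i) the limit is uniform in $x$ (which gives Gateaux differentiability in the $C^0$ topology) and (ii) the resulting expression, viewed as a function of $x$, inherits $C^3$ regularity from $H$.

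First I would abbreviate $A:=\int\phi>0$ and $B:=\int\psi$, so that $\int(\phi+s\psi)=A+sB$, which stays bounded away from zero for small $|s|$. Writing the difference with a common denominator,
\[
\frac{\phi(y)+s\psi(y)}{A+sB}-\frac{\phi(y)}{A}=\frac{A(\phi+s\psi)(y)-(A+sB)\phi(y)}{A(A+sB)}=\frac{s\bigl(A\psi(y)-B\phi(y)\bigr)}{A(A+sB)},
\]
so the factor of $s$ factors out cleanly and
\[
\frac{g_{\phi+s\psi}(x)-g_\phi(x)}{s}=\delta\int_{\T^n}H(x,y)\,\frac{A\psi(y)-B\phi(y)}{A(A+sB)}\,dy.
\]
Letting $s\to 0$ gives the announced expression \eqref{Eq:Diffg}. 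Since $H$ is continuous on the compact set $\T^n\times\T^n$ it is uniformly bounded, and $A(A+sB)\to A^2$ uniformly in $x,y$; hence the difference between the quotient and its limit is bounded in supremum norm by a constant times $|s|$, which gives convergence in $(C^0(\T^n,\R^n),\|\cdot\|_\infty)$ as required.

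For the regularity claim, note that $Dg_\phi(\psi)$ is presented as an integral of $H(x,y)$ against a fixed $L^1$ density $\psi-\phi\,\tfrac{\int\psi}{\int\phi}$. Since $H\in C^3(\T^n\times\T^n,\R^n)$ and $\T^n$ is compact, the partial derivatives $\partial_x^\alpha H(x,y)$ up to order three are continuous and bounded, so by the standard dominated-convergence argument one can differentiate under the integral up to order three, yielding that $x\mapsto Dg_\phi(\psi)(x)$ lies in $C^3(\T^n,\R^n)$.

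The main obstacle I anticipate is largely bookkeeping: one must check that $A+sB$ does not vanish for the values of $s$ used (which holds for $|s|<A/(2|B|+1)$ since $\phi>0$ implies $A>0$), and that the passage to the limit and the differentiation under the integral are all justified uniformly in $x$. None of this involves deep analysis; the whole argument is essentially the quotient rule applied inside an integral sign, together with the boundedness of $H$ and its derivatives on a compact set.
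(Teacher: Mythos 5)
Your proposal is correct and follows essentially the same route as the paper: both isolate a factor of $s$ in the difference of the normalised densities (the paper by splitting into two terms with the intermediate denominator $\int\phi+s\psi$, you by a common denominator — an algebraically identical step), pass to the limit, and obtain $C^3$ regularity in $x$ by differentiating under the integral sign using $H\in C^3$. Your explicit remarks on the non-vanishing of $A+sB$ and the uniformity in $x$ of the $O(|s|)$ error are details the paper leaves implicit, but there is no substantive difference.
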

\begin{proof}
For $|s|$ small enough,
\begin{align*}
g_{\phi+s \psi}(x)-g_\phi(x)&=\delta\int_{\T^n}H(x,y)\left[\frac{(\phi+s \psi)(y)}{\int \phi+s \psi}-\frac{\phi(y)}{\int \phi}\right]dy\\
&=\delta\int_{\T^n}H(x,y)\left[\frac{(\phi+s \psi)(y)}{\int \phi+s \psi}-\frac{\phi(y)}{\int \phi+s \psi}\right]dy+\\
&\quad\quad+\delta\int_{\T^n}H(x,y)\left[\frac{\phi(y)}{\int \phi+s \psi}-\frac{\phi(y)}{\int \phi}\right]dy\\
&=\delta s\int_{\T^n}H(x,y)\left[\frac{\psi}{\int\phi+s\psi}-\frac{\phi\int\psi}{(\int\phi)(\int\phi+s\psi)} \right](y)dy
\end{align*}
from which we obtain \eqref{Eq:Diffg}, and it is easy to check from this expression that $x\mapsto Dg_\phi(\psi)(x)$ is $C^3$, given that $H\in C^3(\T^n\times\T^n,\R)$.
\end{proof}

The following lemma, whose simple proof is omitted, will be useful in what follows. We will denote by $\partial_1H$ the partial derivative of $H$ with respect to the first coordinate.
\begin{lemma}\label{Lem:BOundsGphi}
For $g_\phi$ defined as above
\begin{equation}
|g_\phi'|_\infty\le 1+\delta |\partial_1H|_\infty,\quad
|g_\phi''|_\infty\le \delta |\partial_1^2H|_\infty,\quad
|g_\phi'''|_\infty\le \delta |\partial_1^3H|_\infty.
\end{equation}
Furthermore, for $|\delta|<|\partial_1H|_\infty^{-1}$, $g_\phi$ is a diffeomorphism and for any $\bar \delta\in(0,|\partial_1H|_\infty^{-1})$ there is a constant $K$ depending on $|\partial_1H|_\infty$ and $\bar\delta$  such that
\[
|(g_\phi^{-1})'|_\infty\le 1+K\delta
\]
for every $\delta$ with $|\delta|<\bar\delta$.
\end{lemma}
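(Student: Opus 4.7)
The plan is to verify all three derivative bounds by straightforward differentiation under the integral sign in \eqref{Eq:Expressgphi}. Since $H \in C^3(\T^n \times \T^n, \R^n)$ has partial derivatives in the first coordinate uniformly bounded on the compact product torus, and $y \mapsto \phi(y)/\int \phi$ is a probability density, dominated convergence justifies exchanging $\partial_x^k$ and $\int_{\T^n} \cdot \, dy$ for $k = 1, 2, 3$. The identity term $x$ contributes $I$ to $g_\phi'(x)$ and nothing to the higher derivatives, so pulling $|\cdot|_\infty$ inside the integral and using $\int \phi/\int \phi = 1$ yields the three stated bounds at once.

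For the diffeomorphism claim when $|\delta| < |\partial_1 H|_\infty^{-1}$, I would write $g_\phi(x) = x + \delta R_\phi(x)$ with
\[
R_\phi'(x) = \int_{\T^n} \partial_1 H(x,y)\,\frac{\phi(y)}{\int \phi}\, dy,
\]
uniformly bounded in operator norm by $|\partial_1 H|_\infty$. The Jacobian $g_\phi'(x) = I + \delta R_\phi'(x)$ is then invertible by a Neumann series, so $g_\phi$ is a local $C^3$ diffeomorphism. To upgrade this to a global diffeomorphism of $\T^n$, I pass to the lift $\tilde g_\phi : \R^n \to \R^n$: the mean value inequality gives
\[
|\tilde g_\phi(x_1) - \tilde g_\phi(x_2) - (x_1 - x_2)| \le |\delta|\,|\partial_1 H|_\infty\, |x_1 - x_2|,
\]
so $\tilde g_\phi$ is a bi-Lipschitz perturbation of the identity on $\R^n$, hence injective and proper; being also a local diffeomorphism onto $\R^n$, it is surjective, and quotienting by $\Z^n$ concludes.

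Finally, the bound on $(g_\phi^{-1})'$ comes from $(g_\phi^{-1})'(y) = (g_\phi'(g_\phi^{-1}(y)))^{-1}$ together with the Neumann estimate
\[
\|(I + \delta R_\phi'(x))^{-1}\| \le \frac{1}{1 - |\delta|\,|\partial_1 H|_\infty}.
\]
For $|\delta| \le \bar\delta$, the elementary inequality $1/(1-t) \le 1 + t/(1 - \bar\delta |\partial_1 H|_\infty)$ on $t \in [0, \bar\delta |\partial_1 H|_\infty]$ gives $|(g_\phi^{-1})'|_\infty \le 1 + K\delta$ with $K := |\partial_1 H|_\infty / (1 - \bar\delta|\partial_1 H|_\infty)$ depending only on $|\partial_1 H|_\infty$ and $\bar\delta$.

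The only mildly nontrivial point is the global injectivity/surjectivity on $\T^n$ in the diffeomorphism step, since local invertibility from the inverse function theorem alone yields only a covering map; the mean-value contraction estimate on the lift is what provides the upgrade. Every other piece reduces to differentiating under the integral and summing a geometric series.
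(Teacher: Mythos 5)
Your proof is correct. The paper actually omits the proof of this lemma entirely (it is declared ``simple'' and left to the reader), and what you supply is exactly the standard argument the authors intend: differentiation under the integral sign using $\int \phi/\!\int\phi=1$ for the three derivative bounds, and a Neumann-series estimate on $g_\phi' = I+\delta R_\phi'$ for the inverse. You also correctly identify and close the one point that is not completely routine, namely that pointwise invertibility of the Jacobian only gives a local diffeomorphism (a covering map of $\T^n$), and that global injectivity/surjectivity requires the bi-Lipschitz mean-value estimate on the $\Z^n$-equivariant lift; your final elementary inequality $1/(1-t)\le 1+t/(1-\bar\delta|\partial_1H|_\infty)$ yields a constant $K$ depending only on $|\partial_1H|_\infty$ and $\bar\delta$, exactly as claimed.
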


We now obtain an expression for the Gateaux derivative  $D\mc T_{\phi}(\psi)$ in terms of $Dg_\phi(\psi)(x)$. We only care about the differential at $\phi\in C^2(\T,\R^+)$ evaluated along the direction of $\psi\in C^1(\T,\R^+)$, since the cones we consider are subsets of these spaces. 

\begin{proposition}\label{Lem:ExpressionforDifferential}
 For every $\phi\in C^2(\T^n,\R^+)$ and $\psi\in C^1(\T^n,\R^+)$, assuming that $|\delta|$ is small enough so that $g_\phi$ is a diffeomorphism, the Gateaux derivative (in $C^0$) of $\mc T$ at $\phi$ evaluated on $\psi$ is
\[
D\mc T_\phi(\psi)=\lim_{s\rightarrow 0}\frac{\mc T(\phi+s\psi)-\mc T(\phi)}{s}=P\,D\mc L_{\phi}(\psi)
\]
where
\begin{equation}\label{Eq:DiffmcL}
D\mc L_\phi(\psi)=\lim_{s\rightarrow 0}\frac{\mc L(\phi+s\psi)-\mc L(\phi)}{s}=L_\phi(\psi)-\diverg\left(L_\phi[\phi \,Dg_{\phi}(\psi)]\right)
\end{equation}
and the convergence of the limits is in $(C^0(\T^n,\R),\|\cdot \|_{C^0})$.
\end{proposition}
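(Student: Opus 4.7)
The plan is to split the proof into two steps linked by the chain rule. Since $\mc T=P\mc L$ with $P$ a bounded linear operator on $C^0(\T^n,\R)$, once $\mc L$ is shown to be Gateaux-differentiable at $\phi$ in direction $\psi$ in the $C^0$ sense with the announced derivative, the identity $D\mc T_\phi(\psi)=P\,D\mc L_\phi(\psi)$ will follow immediately from the continuity of $P$ and the fact that bounded linear maps commute with limits. So the whole effort reduces to establishing \eqref{Eq:DiffmcL} in $C^0$.

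To derive the formula I would first work distributionally. Using $\mc L(\eta)=(g_\eta)_*\eta$, for any test function $\varphi\in C^\infty(\T^n,\R)$,
\[
\int_{\T^n}\mc L(\phi+s\psi)(x)\,\varphi(x)\,dx=\int_{\T^n}(\phi+s\psi)(y)\,\varphi(g_{\phi+s\psi}(y))\,dy.
\]
Differentiating the right-hand side at $s=0$ (justified by dominated convergence together with Lemma \ref{Lem:Diffg}) produces the two summands
\[
\int_{\T^n}\psi(y)\,\varphi(g_\phi(y))\,dy+\int_{\T^n}\phi(y)\,D\varphi(g_\phi(y))\cdot Dg_\phi(\psi)(y)\,dy.
\]
The first integral equals $\int L_\phi(\psi)\,\varphi$. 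For the second, by Lemma \ref{Lem:BOundsGphi} the map $g_\phi$ is a diffeomorphism for $|\delta|$ small, and the change of variables $x=g_\phi(y)$ rewrites it as $\int L_\phi[\phi\,Dg_\phi(\psi)](x)\cdot D\varphi(x)\,dx$, with $L_\phi$ applied componentwise to the vector field $\phi\,Dg_\phi(\psi)$. One integration by parts converts this into $-\int\diverg(L_\phi[\phi\,Dg_\phi(\psi)])\,\varphi\,dx$. Since this holds for every smooth $\varphi$, the distributional derivative of $s\mapsto\mc L(\phi+s\psi)$ at $s=0$ agrees with the right-hand side of \eqref{Eq:DiffmcL}.

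The main obstacle will be upgrading this distributional identity to $C^0$ convergence of the difference quotient. For this I would return to the pointwise representation
\[
\mc L(\phi+s\psi)(x)=\frac{(\phi+s\psi)(y_s(x))}{|\det Dg_{\phi+s\psi}(y_s(x))|},\qquad y_s(x):=g_{\phi+s\psi}^{-1}(x),
\]
and Taylor-expand in $s$, uniformly in $x$. Lemma \ref{Lem:Diffg} combined with the implicit function theorem yields
\[
y_s(x)=g_\phi^{-1}(x)-s\,Dg_\phi(y_0(x))^{-1}Dg_\phi(\psi)(y_0(x))+o(s),
\]
with $o(s)$ uniform in $x$, while Jacobi's formula gives an analogous uniform expansion of $|\det Dg_{\phi+s\psi}|$; all remainders are controlled by the $C^3$ bounds furnished by Lemma \ref{Lem:BOundsGphi}. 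Using $\phi\in C^2$, so that $\phi$ and $D\phi$ are uniformly continuous, the three contributions (varying the density, the inverse, and the Jacobian) combine into a genuine $C^0$-limit of the difference quotient. Since this $C^0$-limit must, upon pairing with any smooth $\varphi$, coincide with the distributional limit computed in the previous paragraph, the two agree, yielding \eqref{Eq:DiffmcL} in $C^0$ and closing the argument.
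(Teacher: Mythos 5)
Your proposal is correct in substance and its first half coincides with the paper's argument: the identification of the candidate derivative is done in the paper exactly as you do it, by pairing against a smooth test function, using $\langle L_g\eta,h\rangle=\langle\eta,h\circ g\rangle$, the chain rule through Lemma \ref{Lem:Diffg}, a change of variables, and one integration by parts to produce the divergence term in \eqref{Eq:DiffmcL}. (The paper routes this through the mean value theorem at an intermediate parameter $\xi$ with $|\xi|<\epsilon$ rather than differentiating under the integral at $s=0$; your dominated-convergence version is, if anything, cleaner.) Where you genuinely diverge is the upgrade from the weak identity to $C^0$ convergence of the difference quotient. The paper gets this by a compactness argument: the difference quotients $\tfrac1\epsilon[L_{\phi+\epsilon\psi}-L_\phi](\phi)$ lie in a bounded ball of $C^1$, so pointwise convergence plus Ascoli--Arzel\`a yields uniform convergence on the compact torus. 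You instead propose a direct, uniform Taylor expansion of the pointwise pushforward formula in $s$ (expanding the density, the inverse $y_s=g_{\phi+s\psi}^{-1}$, and the Jacobian determinant separately). Both work; the paper's route is shorter and needs only the $C^1$ bound on the quotients, while yours is more explicit and would also hand you a rate for the remainder, at the cost of more bookkeeping.

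The one point you should not leave implicit is the uniformity of the $o(s)$ terms in your expansion of $y_s(x)$ and of $|\det Dg_{\phi+s\psi}|$. Gateaux differentiability of $\phi\mapsto g_\phi$ by itself gives convergence of $\tfrac{1}{s}(g_{\phi+s\psi}-g_\phi)$ for each fixed direction, but to conclude that the remainders are $o(s)$ \emph{uniformly in $x$}, and to differentiate the inverse and the determinant, you need joint continuity (and spatial regularity) of $(s,x)\mapsto \tfrac{d}{ds}g_{\phi+s\psi}(x)$. This is available here because the explicit formula \eqref{Eq:Diffg} shows $s\mapsto g_{\phi+s\psi}$ is in fact a ratio of affine functions of $s$ with values in $C^3$, so the needed uniform second-order control in $s$ holds; but it should be stated, since it is exactly the step your argument leans on where the paper leans on equicontinuity instead.
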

\begin{proof}
Without loss of generality, let us assume that the integrals of $\phi$ and $\psi$ are equal to one.

\begin{eqnarray*}
L_{(\phi+\epsilon \psi)}(\phi+\epsilon \psi) &=&L_{(\phi+\epsilon
\psi)}(\phi)+\epsilon L_{ (\phi+\epsilon \psi)}(\psi)  \label{1} \\
&=&L_{ \phi}(\phi)+\epsilon \frac{\lbrack L_{ (\phi+\epsilon
\psi)}-L_{ \phi}]}{\epsilon }(\phi)  \notag \\
&&+\epsilon L_{ (\phi+\epsilon \psi)}(\psi)  \notag
\end{eqnarray*}
and from the above follows that the Gateaux differential of $\mc L$ is given by
\begin{align}
D\mc L_\phi(\psi)&=\lim_{\epsilon\rightarrow 0}\frac{L_{(\phi+\epsilon \psi)}(\phi+\epsilon \psi)-L_\phi(\phi)}{\epsilon}\nonumber\\
&=\lim_{\epsilon\rightarrow 0} \frac{\lbrack L_{ (\phi+\epsilon\psi)}-L_{ \phi}]}{\epsilon }(\phi)+L_{\phi}\psi.  \label{Eq:ExpDiff}
\end{align}

Now we evaluate the  limit in \eqref{Eq:ExpDiff}, and   establish its convergence in $C^0$.
Take any function $h\in C^{\infty}(\T^n,\R)$, and recall that by Lemma \ref{Lem:Diffg} $g_\phi$ is Gateaux differentiable at $\phi$. Then, by the mean value theorem, there is $\xi$ -- depending on $\epsilon$ -- with $|\xi|<\epsilon$ such that
\begin{eqnarray*}
\langle[L_{(\phi+\epsilon \psi)}-L_{\phi}](\phi),h\rangle &=&\langle \phi,h\circ g_{\phi+\epsilon \psi}-h\circ g_{\phi}\rangle \\
&=&\left\langle \phi,\,\epsilon\frac{d}{ds}(h\circ g_{\phi+s\psi})|_{s=\xi} \right\rangle
\end{eqnarray*}
where $\langle h_1,h_2\rangle$ denotes Lebesgue integration of the product $h_1h_2$.
The chain rule gives
\[
\frac{d}{ds}(h\circ g_{\phi+s\psi})|_{s=\xi}(x)=Dh_{g_{\phi+\xi\psi}(x)}Dg_{\phi+\xi\psi}(\psi)(x)
\]
so 
\[
\langle[L_{(\phi+\epsilon \psi)}-L_{\phi}](\phi),h\rangle=\epsilon\left\langle \phi,\,Dh_{g_{\phi+\xi\psi}(x)}Dg_{\phi+\xi\psi}(\psi)(x)\right\rangle.
\]
Define the vector field $v_\xi(x):=Dg_{\phi+\xi\psi}(\psi)(x)$ and denote by $v_{\xi,i}$ its components. From the smoothness assumption and  Lemma \ref{Lem:Diffg}, this is a $C^3$ vector field. Changing coordinates, and integrating by parts
\begin{align*}
\left\langle \phi,\,Dh_{g_{\phi+\xi\psi}}v_\xi \right\rangle&=\int_{\T^n} \phi(x) \sum_{i=1}^n\partial_ih(g_{\phi+\xi\psi}(x)) v_{\xi,i}(x)dx\\
&=\sum_{i=1}^n\int_{\T^n}L_{\phi+\xi\psi}(\phi v_{\xi,i}(x))\partial_ih(x)dx\\
& =-\sum_{i=1}^n\int_{\T^n}\partial_i\left[L_{\phi+\xi\psi}(\phi v_{\xi,i})\right](x)\cdot h(x)dx\\
&=-\left\langle\sum_{i=1}^n\partial_i\left[L_{\phi+\xi\psi}(\phi v_{\xi,i})\right],h\right\rangle.
\end{align*}

Since $h\in C^\infty(\T^n,\R)$ is arbitrary 
\[
-\sum_{i=1}^n\partial_i\left[L_{\phi+\xi\psi}(\phi v_{\xi,i})\right]=\frac{1}{\epsilon}[L_{(\phi+\epsilon \psi)}-L_{\phi}](\phi)
\]
and
\[
\lim_{\epsilon\rightarrow 0}\frac{1}{\epsilon}[L_{(\phi+\epsilon \psi)}-L_{\phi}](\phi)=-\sum_{i=1}^n\partial_i\left[L_{\phi+0}(\phi v_{0,i})\right]= -\diverg\left(L_\phi[\phi Dg_{\phi}\psi]\right)
\]
pointwise.
If $|\epsilon|$ is assumed to be bounded by some $\epsilon_0>0$, $\frac{1}{\epsilon}[L_{(\phi+\epsilon \psi)}-L_{\phi}](\phi)$ are in a bounded ball of $C^1$, and by Ascoli-Arzel\'a the convergence of the limit above is  in $C^0$. 
\end{proof}

\section{Uniformly Expanding Coupled Maps}
Once the above framework is in place, one can find conditions for the STOs arising from the coupled uniformly expanding maps to have a stable fixed point. A similar result has been obtained in (\cite{CGT}), but the proof we give in this new framework allows to obtain explicit estimates of the coupling strength for which the STO admits a stable fixed point. To simplify the notation, we are going to restrict ourselves to the case of 1D maps, but similar results are carried out in the multidimensional case with no conceptual differences.

\subsection{Stable fixed point for an STO arising  in uniformly expanding coupled maps}
A differentiable map $f:\T\rightarrow\T$ is uniformly expanding if there is $\sigma>1$ such that $|f'(x)|>\sigma$. Consider  cones of $C^1$ and $a$-$\log$-Lipschitz functions 
\[
\mc V_a:=\left\{\phi\in C^1(\T,\R^+):\, \frac{\phi(x)}{\phi(y)}\le \exp\left(a|x-y|\right)\right\}
\]
where $|x-y|$  denotes the Euclidean distance on $\T$ between $x$ and $y$\footnote{Equivalently, one can define the cone above by requiring that $\left|\frac{d\log \phi(x)}{dx}\right|\le a$.}.
If $P$ is the transfer operator of a $C^2$ uniformly expanding map $f$ of $\T$, it is well known (e.g. \cite{L}) that  there are $a_0>0$ and $\lambda\in(0,1)$ such that 
\begin{equation}\label{Eq:ContP}
P(\mc V_a)\subset\mc V_{\lambda a}
\end{equation}
for every $a>a_0$, and that
\begin{equation}\label{Eq:DiamLogLipFun}
\diam_{\mc V_{a}}(\mc V_{\lambda a})<\infty.
\end{equation}

For $\alpha\ge 0$, let's  define convex cones
\[
\mc C_\alpha:=\left\{\phi\in C^2(\T,\R^+):\,\left|\frac{\phi''(x)}{\phi(x)}\right|\le \alpha\right\} \quad\mbox{and}\quad U_{a,\alpha}:= \mc V_a\cap\mc C_\alpha.
\]
One can show that if the coupling strength is sufficiently small, for suitable values of the parameters $a$ and $\alpha$, the STO $\mc T$ has an attractive fixed point in $U_{a,\alpha}$. Let us remark that we cannot just work on $\mc V_a$ and we need to restrict ourselves to $\mc C_\alpha$ in order to control the derivative of $\mc T$. 
The main result of this section is the following.
\begin{theorem}\label{Thm:UnifExpCoupMaps}
Given $\mc T$ an STO  as in Definition \ref{Def:Self-ConsOp} with $f\in C^3(\T,\T)$ a uniformly expanding map, and $H\in C^3(\T\times\T,\R)$, there exists $\tilde\delta>0$ such that if the coupling strength $\delta$ satisfies $|\delta|\le \tilde\delta$, then $\mc T$ has an attracting fixed point in $U_{a,\alpha}$ for some $a,\alpha>0$.
\end{theorem}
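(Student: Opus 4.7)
The strategy is to set up the assumptions of Propositions \ref{Thm:ContractionProperties} and \ref{Thm:OrderPreservationCondition} with $\mc V=\mc V_a$, $U=U_{a,\alpha}$, and $\mc V'$ a smaller subcone of $\mc V_a$ with finite $d_{\mc V_a}$-diameter. Concretely, I would aim to show that for $|\delta|$ small enough there exist $a,\alpha>0$ and $\lambda'\in(0,1)$, $\alpha'\le\alpha$ such that (i) $\mc T(U_{a,\alpha})\subset U_{\lambda' a,\alpha'}\subset U_{a,\alpha}$, (ii) $D\mc T_\phi(\mc V_a)\subset\mc V_a$ for every $\phi\in U_{a,\alpha}$, and (iii) $\diam_{\mc V_a}(U_{\lambda' a,\alpha'})<\infty$. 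Then Proposition \ref{Thm:OrderPreservationCondition} yields order preservation on $U_{a,\alpha}$, Proposition \ref{Thm:ContractionProperties} gives strict Hilbert-metric contraction, and a standard iteration argument produces the attracting fixed point.

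The invariance step (i) is the place where the perturbative character of $\mc T$ with respect to the decoupled operator $P$ is used. Lemma \ref{Lem:BOundsGphi} shows that $g_\phi$ is $C^3$-close to the identity with an error of size $O(\delta)$ uniformly in $\phi\in U_{a,\alpha}$, so the linear operator $L_\phi$ is an $O(\delta)$ perturbation of the identity in the $C^2$ topology. One can then track separately the log-Lipschitz parameter and the bound $|\phi''/\phi|$: the action $\phi\mapsto L_\phi\phi$ enlarges $a$ and $\alpha$ by factors $1+O(\delta)$, while the subsequent action of $P$ contracts the log-Lipschitz parameter by the factor $\lambda$ from \eqref{Eq:ContP} and produces a comparable contraction on $|\cdot''/\cdot|$ (uniform expansion with $f\in C^3$ implies that the transfer operator of $f$ leaves some cone $\mc C_\alpha$ invariant for $\alpha$ large enough, by a Lasota--Yorke type estimate on second derivatives). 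Choosing $a$, $\alpha$ in the right range and $|\delta|\le\tilde\delta$ small enough so that the perturbation does not destroy these contractions yields $\mc T(U_{a,\alpha})\subset U_{\lambda' a,\alpha'}$, with $\lambda'<1$ and $\alpha'<\alpha$.

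For order preservation (ii), I use the explicit formula from Proposition \ref{Lem:ExpressionforDifferential}, $D\mc T_\phi(\psi)=P L_\phi(\psi)-P\,\diverg\bigl(L_\phi[\phi\,Dg_\phi(\psi)]\bigr)$. For $\delta=0$ this is simply $P\psi$, which lies in $\mc V_{\lambda a}\subset\mc V_a$ by \eqref{Eq:ContP}. For small $\delta$, the first term is still mapped by $P$ close to $\mc V_{\lambda a}$ (since $L_\phi$ is an $O(\delta)$-perturbation of the identity and $\psi\in\mc V_a$), and the second ``divergence'' correction carries an explicit factor of $\delta$ coming from $Dg_\phi(\psi)$ by \eqref{Eq:Diffg}. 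Using $\phi\in U_{a,\alpha}$ to control the derivative of $L_\phi[\phi\,Dg_\phi(\psi)]$ in terms of $a$, $\alpha$ and $|\partial_1^k H|_\infty$, and then the smoothing of $P$, the correction can be made arbitrarily small in $C^1$-norm. Since any $\psi\in\mc V_a$ normalized to $\int\psi=1$ satisfies $\psi\ge e^{-a}$, the leading term is bounded below by a positive constant depending only on $a$, so for $|\delta|$ small enough $D\mc T_\phi(\psi)$ remains strictly positive and keeps its log-derivative bounded by $a$. The continuity hypothesis $t\mapsto D\mc T_{t\phi+(1-t)\psi}(\xi)$ needed in Proposition \ref{Thm:OrderPreservationCondition} follows from the explicit formula and the $C^3$-regularity assumptions.

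The final step combines the diameter bound $\diam_{\mc V_a}(\mc V_{\lambda' a})<\infty$ from \eqref{Eq:DiamLogLipFun} (which dominates $\diam_{\mc V_a}(U_{\lambda' a,\alpha'})$) with Proposition \ref{Thm:ContractionProperties}: for some $\rho<1$,
\begin{equation*}
d_{\mc V_a}(\mc T\phi,\mc T\psi)\le\rho\, d_{\mc V_a}(\phi,\psi)\qquad\forall\,\phi,\psi\in U_{a,\alpha}.
\end{equation*}
Iterating, $(\mc T^n\phi_0)_{n\ge0}$ is $d_{\mc V_a}$-Cauchy for any $\phi_0\in U_{a,\alpha}$; after normalizing $\int\mc T^n\phi_0=1$, the orbit lies in the bounded $C^2$ set $U_{\lambda' a,\alpha'}$ and is $C^0$-relatively compact by Ascoli--Arzelà, so one extracts a limit $\bar\phi\in U_{a,\alpha}$ which, by continuity of $\mc T$, is a fixed point; its attractiveness and uniqueness within $U_{a,\alpha}$ follow from the strict Hilbert-metric contraction together with the standard fact that $d_{\mc V_a}$-convergence implies $L^1$-convergence of normalized densities. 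The main obstacle I expect is keeping the $\mc C_\alpha$ part of the cone invariant: the log-Lipschitz contraction of $P$ is classical, but the corresponding bound on $|\phi''/\phi|$ under $\mc T=P\mc L$ requires careful bookkeeping of how $L_\phi$ (whose generator depends on $\phi$ itself) enlarges $\alpha$, and this is where the size of $\tilde\delta$ is ultimately pinned down.
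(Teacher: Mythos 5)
Your proposal follows essentially the same route as the paper: invariance of $U_{a,\alpha}$ under $\mc T$ by tracking the log-Lipschitz and $|\phi''/\phi|$ parameters through $\mc L$ and $P$ separately, cone preservation of the differential by writing $D\mc T_\phi(\psi)=PL_\phi\psi$ plus an $O(\delta)$ divergence correction controlled in $C^1$ and absorbed via a small-perturbation lemma (the paper's Lemma \ref{Lem:StrictInclusion}), and then Propositions \ref{Thm:OrderPreservationCondition} and \ref{Thm:ContractionProperties} together with the finite diameter \eqref{Eq:DiamLogLipFun}. The argument is correct and matches the paper's proof in structure and in the identification of where $\tilde\delta$ is pinned down.
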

The above is an application of Proposition \ref{Thm:OrderPreservationCondition}. In the following subsection we are going to show  how to use the framework below to obtain explicit estimates on $\tilde \delta$ in  specific instances. 

There are two main steps to prove this result: 1) the first is to show that $\mc T$ keeps $U_{a,\alpha}$ invariant for a suitable choice of $a$ and $\alpha$; 2) the second  is  to prove order preservation by studying the Gateaux derivative of $\mc T$ restricted to $U_{a,\alpha}$. We start by studying the invariance of $U_{a,\alpha}$.

\begin{lemma}\label{Lem:ConeCoupling}
For every $a_1>0$ there is $C_1>0$ -- depending on $a_1$ -- such that for every $\delta$ satisfying  $|\delta|<\bar \delta$,  with $\bar\delta$ as in Lemma \ref{Lem:BOundsGphi}, and every $a>a_1$
\begin{equation}\label{Eq:ConeCond}
 L_\phi(\psi)\in \mc V_{[1+C_1\delta]a}\quad\quad \forall\phi,\psi\in\mc V_a.
\end{equation}
\end{lemma}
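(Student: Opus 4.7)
The plan is to use the explicit change-of-variables formula for the transfer operator of the diffeomorphism $g_\phi$ (which is a diffeomorphism by Lemma \ref{Lem:BOundsGphi} since $|\delta|<\bar\delta<|\partial_1 H|_\infty^{-1}$) and then control the log-Lipschitz norm of $L_\phi(\psi)$ by splitting it into a contribution from $\psi$ and a contribution from the Jacobian of $g_\phi^{-1}$. Concretely, writing $u=g_\phi^{-1}(x)$ and $v=g_\phi^{-1}(y)$, I have
\[
\log L_\phi(\psi)(x)-\log L_\phi(\psi)(y)=\bigl[\log\psi(u)-\log\psi(v)\bigr]-\bigl[\log|g_\phi'(u)|-\log|g_\phi'(v)|\bigr].
\]
The first bracket is bounded by $a|u-v|$ since $\psi\in\mc V_a$. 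For the second bracket, I would use $g_\phi'(z)\ge 1-\delta|\partial_1 H|_\infty>0$ together with $|g_\phi''|_\infty\le\delta|\partial_1^2 H|_\infty$ from Lemma \ref{Lem:BOundsGphi} to bound $|(\log|g_\phi'|)'|_\infty$ by $C\delta$ for some $C=C(|\partial_1H|_\infty,|\partial_1^2H|_\infty,\bar\delta)$, yielding $|\log|g_\phi'(u)|-\log|g_\phi'(v)||\le C\delta|u-v|$.

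Next, I convert $|u-v|$ back into a bound in terms of $|x-y|$ using the last part of Lemma \ref{Lem:BOundsGphi}: $|u-v|\le(1+K\delta)|x-y|$. Combining the two estimates gives
\[
\bigl|\log L_\phi(\psi)(x)-\log L_\phi(\psi)(y)\bigr|\le (a+C\delta)(1+K\delta)|x-y|.
\]
To get this expression into the form $(1+C_1\delta)a|x-y|$, I expand $(a+C\delta)(1+K\delta)=a+(aK+C+CK\delta)\delta$, so it suffices to take
\[
C_1\ge K+\frac{C(1+K\bar\delta)}{a}.
\]
Since $a>a_1$, choosing $C_1:=K+C(1+K\bar\delta)/a_1$ — which depends on $a_1$, on $\bar\delta$ and on $|\partial_1 H|_\infty,|\partial_1^2 H|_\infty$, but not on $a$, $\phi$ or $\psi$ — yields the desired inclusion $L_\phi(\psi)\in\mc V_{[1+C_1\delta]a}$.

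The only step requiring care is the bound on the Jacobian term: one has to verify that the lower bound $g_\phi'\ge 1-\delta|\partial_1H|_\infty$ stays uniformly bounded below (this is where $\bar\delta<|\partial_1H|_\infty^{-1}$ is essential) so that $\log|g_\phi'|$ is genuinely Lipschitz with constant $O(\delta)$ rather than $O(1)$; everything else is just bookkeeping. The appearance of $1/a_1$ in $C_1$ reflects the fact that the Jacobian contribution is a fixed $O(\delta)$ perturbation that must be absorbed into a multiplicative distortion of the exponent $a$, which is only possible when $a$ is bounded away from $0$.
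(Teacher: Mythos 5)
Your proof is correct and follows essentially the same route as the paper: both use the formula $L_\phi(\psi)=\frac{\psi}{|g_\phi'|}\circ g_\phi^{-1}$, split the log-Lipschitz constant into the contribution of $\psi$ (giving the factor $a(1+O(\delta))$ via the bound on $(g_\phi^{-1})'$) and the $O(\delta)$ contribution of the Jacobian term $g_\phi''/(g_\phi')^2$, and absorb the latter additively into the exponent by taking $C_1$ proportional to $1+a_1^{-1}$. The only (cosmetic) difference is that the paper bounds the derivative of $\log L_\phi(\psi)$ directly rather than working with finite differences of logarithms.
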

\begin{proof}
Fix $a_1>0$ and pick any $a>a_1$. If $|\delta|<\bar\delta$, $g_\phi$ is a diffeomorphism $\forall \phi\in \mc V_a$ and 
\[
L_\phi(\psi)=\frac{\psi}{|g_\phi'|}\circ g_\phi^{-1}.
\]
It is immediate to verify that if $\psi$ is $C^1$, so is $ L_\phi(\psi)$ and 
\begin{align}
\frac{( L_\phi(\psi))'}{L_\phi(\psi)}&=\frac{|g_\phi'|}{\psi}\circ g_\phi^{-1}\left(\frac{\psi'}{|g_\phi'|^2}\circ g_\phi^{-1}-\frac{\psi g_\phi''}{|g_\phi'|^3}\circ g_\phi^{-1}\right)\label{Eq:FirstDeriv}\\
&=\left(\frac{\psi'}{\psi}\frac{1}{|g_\phi'|}\right)\circ g_\phi^{-1}-\left(\frac{ g_\phi''}{|g_\phi'|^2}\right)\circ g_\phi^{-1}.\nonumber
\end{align}
Recalling the bounds in Lemma \ref{Lem:BOundsGphi} one can conclude that there is a constant $C_\#>0$, depending on $H$ only, such that 
\[
\left|\frac{(L_\phi(\psi))'}{L_\phi(\psi)}\right|\le a(1+C_\#\delta)+C_\#\delta.
\]
In this computation it has been crucial that the bounds on the derivatives of $g_\phi$ are independent of $\phi$ and depend only on $H$ and $\delta$. Picking $C_1:=C_\#(1+a_1^{-1})$, the claim of the lemma follows.
\end{proof}

Combining \eqref{Eq:ContP} and \eqref{Eq:ConeCond} we get the following result.
\begin{corollary}\label{Cor:InvCones}
There are $\tilde a>0$, $\tilde \delta>0$, and $\tilde \lambda\in(0,1)$ such that for every $a>\tilde a$ and when $|\delta|<\tilde\delta$
\begin{equation}\label{Eq:ContBothOp}
PL_\phi(\psi)\in\mc V_{\tilde \lambda a}
\end{equation}
for all $\phi,\psi\in \mc V_a$. In particular $\mc T(\mc V_a)\subset \mc V_{\tilde \lambda a}$.
\end{corollary}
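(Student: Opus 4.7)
The plan is to chain the two invariance results already established in the excerpt. Lemma \ref{Lem:ConeCoupling} tells us that the coupling operator $L_\phi$ maps $\mc V_a$ into a slightly fatter cone $\mc V_{(1+C_1\delta)a}$, with the enlargement factor going to $1$ as $\delta\to 0$; equation \eqref{Eq:ContP} tells us that the transfer operator $P$ of the uniformly expanding map $f$ strictly contracts the cone family via $P(\mc V_a)\subset \mc V_{\lambda a}$ for every $a>a_0$, with a fixed $\lambda\in(0,1)$. Composing these two effects, for $\phi,\psi\in \mc V_a$ one has
\[
PL_\phi(\psi)\in P(\mc V_{(1+C_1\delta)a})\subset \mc V_{\lambda(1+C_1\delta)a},
\]
provided we can ensure that both (i) the hypothesis of Lemma \ref{Lem:ConeCoupling} holds, and (ii) the enlarged parameter $(1+C_1\delta)a$ lies above $a_0$ so that \eqref{Eq:ContP} applies.

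To take care of these points I would fix $\tilde a>a_0$, apply Lemma \ref{Lem:ConeCoupling} with $a_1=\tilde a$ to extract the corresponding constant $C_1=C_1(\tilde a)>0$, and then choose
\[
\tilde\delta:=\min\!\left\{\bar\delta,\ \frac{1-\lambda}{2\lambda C_1}\right\},\qquad \tilde\lambda:=\frac{1+\lambda}{2}\in(0,1).
\]
With this choice, whenever $a>\tilde a$ and $|\delta|<\tilde\delta$ one has $(1+C_1|\delta|)a>a>a_0$, so the chain above is valid, and moreover $\lambda(1+C_1|\delta|)<\lambda+\tfrac{1-\lambda}{2}=\tilde\lambda$. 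Hence $PL_\phi(\psi)\in \mc V_{\tilde\lambda a}$, which is precisely \eqref{Eq:ContBothOp}.

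The final sentence of the corollary is then immediate: by the definition of $\mc T$ in Definition \ref{Def:Self-ConsOp}, $\mc T\phi = PL_\phi(\phi)$, so taking $\psi=\phi\in\mc V_a$ in the invariance just proved gives $\mc T(\mc V_a)\subset \mc V_{\tilde\lambda a}$. There is no real obstacle in this argument --- it is a routine composition --- the only mild care required is in the bookkeeping of the two thresholds $a_0$ (from $P$) and $a_1$ (from $L_\phi$) and in picking $\tilde\delta$ small enough that the combined factor $\lambda(1+C_1\tilde\delta)$ stays strictly below $1$, which is what yields the strict contraction constant $\tilde\lambda$.
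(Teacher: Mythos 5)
Your proof is correct and follows essentially the same route as the paper's: apply Lemma \ref{Lem:ConeCoupling} to place $L_\phi(\psi)$ in $\mc V_{(1+C_1|\delta|)a}$, then apply \eqref{Eq:ContP} and shrink $\delta$ so that $\lambda(1+C_1|\delta|)$ stays below a fixed $\tilde\lambda<1$. The only (cosmetic) difference is that you make the choices of $\tilde\delta$ and $\tilde\lambda$ fully explicit and uniform in $\delta$, whereas the paper simply takes $\tilde a=\max\{a_0,a_1\}$ and $\tilde\lambda=(1+C_1\delta)\lambda$.
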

\begin{proof}
Pick $\tilde a=\max\{a_0,a_1\}$. Choose $\tilde \delta\in(0,\bar\delta)$ sufficiently small such that
\[
1+C_1\delta < \lambda^{-1}
\]
where $\lambda$ is as in \eqref{Eq:ContP}, and pick $\tilde\lambda:= (1+C_1\delta)\lambda$.
\end{proof}

\begin{lemma}\label{Lem:Invariance1}
There are $a,\,\alpha>0$ such that for any $\delta$ with $|\delta|<\tilde\delta$
\[
\mc T(U_{a,\alpha})\subset U_{ a, \alpha}.
\]
\end{lemma}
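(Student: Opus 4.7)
The plan is to handle the $\mc V_a$ and $\mc C_\alpha$ conditions of $U_{a,\alpha}$ separately, leveraging the decomposition $\mc T = P\,L_\phi$. Fix any $a>\tilde a$; then invariance $\mc T(\mc V_a)\subset \mc V_a$ is already provided by Corollary \ref{Cor:InvCones}, and in particular $\mc T(U_{a,\alpha})\subset \mc V_a$. The real content is therefore to find $\alpha$ (possibly after shrinking $\tilde\delta$) such that $|(\mc T\phi)''/\mc T\phi|\le \alpha$ whenever $\phi\in U_{a,\alpha}$. I will estimate this quantity by treating the actions of $L_\phi$ and of $P$ on second log-derivatives in turn.

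For the $L_\phi$ step, setting $h=g_\phi^{-1}$ (well defined by Lemma \ref{Lem:BOundsGphi} for $|\delta|<\tilde\delta$) and writing $L_\phi\phi(x) = \phi(h(x))\,h'(x)$, a direct two-fold differentiation gives
\begin{equation*}
\frac{(L_\phi\phi)''(x)}{L_\phi\phi(x)} \;=\; \frac{\phi''(h(x))}{\phi(h(x))}\,h'(x)^2 \;+\; 3\,\frac{\phi'(h(x))}{\phi(h(x))}\,h''(x) \;+\; \frac{h'''(x)}{h'(x)}.
\end{equation*}
Implicit differentiation of $h\circ g_\phi=\id$ together with Lemma \ref{Lem:BOundsGphi} yields $|h'|=1+O(\delta)$ and $|h''|, |h'''|=O(\delta)$ uniformly in $\phi\in\mc V_a$, so using $|\phi'/\phi|\le a$ and $|\phi''/\phi|\le\alpha$ one obtains $L_\phi\phi\in\mc C_{\alpha_1}$ with
\begin{equation*}
\alpha_1 \;=\; \alpha\,(1+c_1\delta) \;+\; c_2(1+a)\,\delta,
\end{equation*}
where $c_1,c_2$ depend only on $H$. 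Lemma \ref{Lem:ConeCoupling} gives in parallel $L_\phi\phi\in \mc V_{(1+C_1\delta)a}$.

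For the $P$ step, write $P\eta(x)=\sum_i \eta(h_i(x)) h_i'(x)$ over the inverse branches of $f$ and repeat the computation above; one finds that $(P\eta)''(x)/P\eta(x)$ is a convex combination (with weights $\eta(h_i)h_i'/P\eta$) of the branch-wise quantities $(\eta''/\eta)(h_i)\,h_i'^2 + 3(\eta'/\eta)(h_i)\,h_i'' + h_i'''/h_i'$. Crucially, the second log-derivative factor is multiplied by $h_i'^2\le 1/\sigma^2<1$; together with $C^3$-regularity of $f$, which bounds $|h_i''|$ and $|h_i'''/h_i'|$ by a constant $C_f$ independent of $\delta$ and $a$, this yields $\mc T\phi\in\mc C_{\alpha'}$ with
\begin{equation*}
\alpha' \;=\; \frac{\alpha(1+c_1\delta)+c_2(1+a)\delta}{\sigma^2} \;+\; C_f(1+a).
\end{equation*}
For $\tilde\delta$ small enough to ensure $(1+c_1\delta)/\sigma^2<1$ throughout $|\delta|<\tilde\delta$, the affine map $\alpha\mapsto\alpha'$ is a contraction with fixed point $\alpha_*$ depending continuously on $\delta$; choosing $\alpha\ge\sup_{|\delta|<\tilde\delta}\alpha_*$ gives $\alpha'\le\alpha$, and combining with $\mc V_a$-invariance concludes the proof.

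The step I expect to be most delicate is the $P$-estimate: one needs to use the convex-combination structure in an essential way, so that the $\alpha_1$ contribution is contracted by $1/\sigma^2$ and $a$ appears only additively. A naive branch-by-branch bound would introduce the number of branches as a multiplicative factor and could fail to produce a contraction when $f$ has many branches or when $a$ is large.
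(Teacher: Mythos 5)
Your proof is correct and takes essentially the same route as the paper's: both decompose $\mc T=P\circ\mc L$, track the second log-derivative through each factor, exploit that the $\alpha$-contribution is contracted by $\sigma^{-2}$ under $P$ while the $a$-dependence enters only additively, and then choose $\alpha$ at least as large as the fixed point of the resulting affine map (after possibly shrinking $\tilde\delta$). The ``convex combination over inverse branches'' observation you flag as delicate is precisely the step the paper uses when it bounds $(P\phi)''$ by $(P\phi)\left[\alpha\sigma^{-2}+3aK+K\right]$.
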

\begin{proof}
We already know from Corollary \ref{Cor:InvCones}, that for $a>\tilde a$ 
\[
\mc T(\mc V_a)\subset\mc V_{\tilde \lambda a}.
\]
For any $\phi\in U_{a,\alpha}$, $\alpha$ to be specified later,
\[
P\phi(x)=\sum_{i=1}^d\phi(f_i^{-1}(x))(f_i^{-1})'(x)
\]
where $\{f_i^{-1}\}_{i=1}^d$  are the inverse branches of $f$ (we assumed without loss of generality that $f'>0$), and $d$ is the degree of the map $f$. Taking first and second derivatives of this expression we obtain
\begin{align*}
(P\phi)'(x)&=\sum_{i=1}^d\phi'(f_i^{-1}(x))\cdot[(f_i^{-1})'(x)]^2+\phi(f_i^{-1}(x))\cdot(f_i^{-1})''(x)\\
(P\phi)''(x)&=\sum_{i=1}^d\phi''(f_i^{-1}(x))\cdot [(f^{-i})'(x)]^3+3\phi'(f_i^{-1}(x))\cdot (f_i^{-1})'(x)\cdot(f_i^{-1})''(x)+\\
&\quad\quad\quad+\phi(f_i^{-1}(x))\cdot(f_i^{-1})'''(x).
\end{align*}
Recall that $\sigma>1$ is  the minimal expansion of $f$, and bounding $|(f_i^{-1})''|$, $|(f_i^{-1})'''|$,$(f_i^{-1})'$ with some constant $K$ that depends on the $C^3$ norm of $f$ and on $\sigma$ only, we obtain
\begin{align*}
(P\phi)''(x)&\le \sum_{i=1}^d\alpha\phi(f_i^{-1}(x))(f_i^{-1})'(x)\sigma^{-2}+3a\phi(f_i^{-1}(x))(f^{-i})'(x)K+\\
&\quad\quad\quad+\phi(f_i^{-1}(x))(f_i^{-1})'(x)K\\
&\le (P\phi)(x)\left[\alpha\sigma^{-2}+3aK+K\right].
\end{align*}
A lower bound for $(P\phi)''(x)$ can be obtained analogously, and this proves that for $a>\tilde a$
\begin{equation}\label{Eq:InvP}
P(U_{a,\alpha})\subset U_{ \tilde \lambda a,\alpha\sigma^{-2}+3aK+K}.
\end{equation}
Similar computations show that (for $a$ sufficiently large) and with $|\delta|<\bar\delta$
\begin{equation}\label{Eq:InvmcL}
\mc L(U_{a,\alpha})\subset U_{[1+O(\delta)]a,[1+O(\delta)]\alpha+O(\delta)}
\end{equation}
where $K'$ depends on the norms of the derivatives of $H$, on $a$, and on $\bar \delta$.

Combining \eqref{Eq:InvP} and \eqref{Eq:InvmcL}, picking $\alpha$ sufficiently large and possibly decreasing $\tilde \delta$, there is $\lambda'\in(0,1)$ such that 
\[
\mc T(U_{a,\alpha})\subset U_{\tilde \lambda a,\lambda'\alpha}\subset U_{a,\alpha}.
\] 
\end{proof} 

Now that we have established the existence of an invariant cone $U_{a,\alpha}$ for $\mc T$, we find conditions on $\delta$ for the differential of $\mc T$ to preserve a cone of $\log$-Lipschitz functions, thus implying order preservation.

\begin{proposition}\label{Prop:Inclusi}
Consider $\tilde a>0$ and  $\tilde \delta>0$ as in Corollary \ref{Cor:InvCones}. Then for any $a>\tilde a$, $\alpha\ge 0$, and $\lambda''\in(\tilde\lambda,1)$ there is $\delta_1\in[0,\tilde \delta)$ -- depending on $f$, $H$, $a$, and $\alpha$ -- such that if $|\delta|<\delta_1$ 
\[
D\mc T_\phi(\mc V_a)\subset\mc V_{\lambda''a}
\]
for any $\phi\in U_{a,\alpha}$.
\end{proposition}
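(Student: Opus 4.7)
The plan is to reduce the claim to a logarithmic derivative estimate for $D\mc L_\phi(\psi)$ and then apply the cone contraction \eqref{Eq:ContP} of $P$. Using Proposition \ref{Lem:ExpressionforDifferential}, write $D\mc T_\phi(\psi)=P\,D\mc L_\phi(\psi)$ with
\[
D\mc L_\phi(\psi)=L_\phi(\psi)-\diverg\bigl(L_\phi[\phi\,Dg_\phi(\psi)]\bigr).
\]
The target is to show that, for $\phi\in U_{a,\alpha}$ and $\psi\in\mc V_a$, one has $D\mc L_\phi(\psi)\in\mc V_{(1+C_2\delta)a}$ for some constant $C_2=C_2(a,\alpha,H,f)$; then \eqref{Eq:ContP} yields $D\mc T_\phi(\psi)\in\mc V_{\lambda(1+C_2\delta)a}$, and choosing $\delta_1\in(0,\tilde\delta]$ so that $\lambda(1+C_2\delta_1)\le\lambda''$ — possible since $\lambda''>\tilde\lambda\ge\lambda$ — completes the argument.

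First I factor $D\mc L_\phi(\psi)=L_\phi(\psi)(1+R_\delta)$, where
\[
R_\delta:=-\frac{\diverg\bigl(L_\phi[\phi\,Dg_\phi(\psi)]\bigr)}{L_\phi(\psi)}.
\]
By Lemma \ref{Lem:Diffg}, $Dg_\phi(\psi)$ is a $C^3$ function whose $C^k$-norm is $O(\delta)$ with constants depending on $|\partial_1^{k+1}H|_\infty$, $|\psi|_1$ and $|\phi|_1$. Together with the uniform control on $g_\phi$ and $g_\phi^{-1}$ given by Lemma \ref{Lem:BOundsGphi}, expanding the divergence via the product rule shows $\|R_\delta\|_\infty\le C\delta$ for some $C=C(a,\alpha,H)$. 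Here the $\mc C_\alpha$-regularity of $\phi$ bounds $\phi''$ pointwise by $\alpha\phi$, and the $\mc V_a$-condition bounds $\phi'$ by $a\phi$; both are needed because $\diverg(L_\phi[\phi\cdot(\,\cdot\,)])$ generates terms in $\phi'$ and $\phi''$ after composition with $g_\phi^{-1}$. For $|\delta|$ sufficiently small $1+R_\delta>0$, so $D\mc L_\phi(\psi)$ is a positive $C^1$ function and lies in the positive cone.

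Next I estimate
\[
\frac{(D\mc L_\phi(\psi))'}{D\mc L_\phi(\psi)}=\frac{(L_\phi(\psi))'}{L_\phi(\psi)}+\frac{R_\delta'}{1+R_\delta}.
\]
By the proof of Lemma \ref{Lem:ConeCoupling} the first term is bounded by $(1+C_1\delta)a$. For the second, computing $R_\delta'$ via quotient and product rules introduces derivatives of $L_\phi(\psi)$ and of $L_\phi[\phi\,Dg_\phi(\psi)]$, which bring in second derivatives of $\phi$ (bounded by $\alpha\phi$), second derivatives of $g_\phi$ (bounded by $|\delta||\partial_1^2H|_\infty$ from Lemma \ref{Lem:BOundsGphi}), and first derivatives of $Dg_\phi(\psi)$ itself (of order $\delta$). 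Every resulting summand carries at least one factor of $\delta$ inherited from $Dg_\phi(\psi)$, so $\|R_\delta'/(1+R_\delta)\|_\infty\le C_2'\delta$. Setting $C_2:=C_1+C_2'/a$ yields $D\mc L_\phi(\psi)\in\mc V_{(1+C_2\delta)a}$ as wanted.

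The main obstacle is precisely this second step: although every term in $R_\delta'$ is manifestly $O(\delta)$, the expression mixes pushforwards, inverse branches of $g_\phi$, and the two-variable integrals defining $Dg_\phi(\psi)$, so one must check term by term that no hidden $\delta^{-1}$ emerges from denominators and that nothing blows up when $L_\phi(\psi)$ gets small. This is where restricting to $U_{a,\alpha}\subsetneq\mc V_a$ is unavoidable: without the $\mc C_\alpha$-regularity of $\phi$, differentiating $R_\delta$ would produce uncontrolled second derivatives of $\phi$. Once this book-keeping is carried out the final cone inclusion is immediate from \eqref{Eq:ContP}.
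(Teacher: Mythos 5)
Your proposal is correct and follows essentially the same route as the paper: both reduce the claim to showing that the divergence term in $D\mc L_\phi(\psi)=L_\phi(\psi)-\diverg\left(L_\phi[\phi\,Dg_\phi(\psi)]\right)$ is an $O(\delta)$ perturbation of $L_\phi(\psi)$ (relative to $L_\phi(\psi)$, hence uniformly over $\psi\in\mc V_a$), using Lemmas \ref{Lem:Diffg} and \ref{Lem:BOundsGphi} together with the $U_{a,\alpha}$-regularity of $\phi$ to control $\phi'$, $\phi''$ and the derivatives of $g_\phi^{-1}$. The only cosmetic difference is that you absorb the perturbation multiplicatively as $L_\phi(\psi)(1+R_\delta)$ and apply the contraction \eqref{Eq:ContP} of $P$ at the very end, whereas the paper bounds the $C^1$ norm of the $P$-image of the correction and invokes the additive perturbation Lemma \ref{Lem:StrictInclusion}, whose proof is precisely your multiplicative log-derivative computation.
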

\begin{proof}
In the case at hand, taking into consideration Lemma \ref{Lem:Diffg} and equation \eqref{Eq:DiffmcL},  the expression of the differential of $\mc T$ at $\phi\in U_{a,\alpha}$ evaluated on $\psi\in \mc V_a$ is
\begin{equation}\label{Eq:RecDiffT}
PD\mc L_\phi(\psi)=PL_\phi\psi-\delta P\left[L_\phi\left(\frac{\phi(x)}{\int \phi}\int_{\T}H(x,y)\left[\psi-\phi\,\frac{\int\psi}{\int\phi} \right](y)dy\right)\right]'.
\end{equation}
We already know from Corollary \ref{Cor:InvCones} that $PL_\phi\psi\in\mc V_{\lambda' a}\subset \mc V_a$, and we need to find a condition on $\delta$ ensuring that the second term in the expression above is small enough so that adding it to $PL_\phi\psi$ will not push this density outside of $\mc V_a$. To this end, the  following lemma will be useful:
\begin{lemma}\label{Lem:StrictInclusion}
For every $a'<a$, every  $\psi\in \mc V_{a'}$ with $\int \psi=1$, and every $\psi_1\in C^1(\T,\R)$ with \[\|\psi_1\|_{C^1}<\min\{\frac{1}{4a'e^{a'/2}+6e^{a'}} (a-a'),\,\frac{e^{-a'/2}}{2}\},\] one has  $\psi+\psi_1\in \mc V_{a}$.
\end{lemma}
\begin{proof}
Let $\psi\in \mc V_{a'}$ and $\Delta>0$ with $\Delta e^{a'/2}<\frac{1}{2}$. Consider $\psi_1\in C^1(\T,\R)$ s.t. $\|\psi_1\|_{C^1}\le \Delta$. Using that $\frac{1}{1-y}<1+4y$ for $y\in[0,1/2]$,
\begin{align*}
\left|\frac{\psi'(x)+\psi_1'(x)}{\psi(x)+\psi_1(x)}\right|&\le\left|\frac{\psi'(x)}{\psi(x)}\right|\left|\frac{1}{1-\psi_1(x)/\psi(x)}\right|+\frac{|\psi_1'|(x)}{e^{-\frac{a'}{2}} -\Delta}\\
&\le a'[1+4\Delta e^{a'/2}]+ \Delta e^{a'/2} (1+4\Delta e^{a'/2})\\
&\le a'[1+4\Delta e^{a'/2} ] + 6\Delta e^{a'}
\end{align*}
where we also used that, for $\Delta<e^{-a'/2}$, $\psi>\psi_1$ and
\[
|\psi+\psi_1|\ge \min|\psi(x)|-\Delta\ge e^{-\frac{a'}2}-\Delta.
\]
From the assumption that $a'<a$ picking $\Delta<\frac{e^{-a'/2}}{2}$ so that 
\[
a'[1+4\Delta e^{a'/2} ] + 6\Delta e^{a'} <a,\mbox{ i.e. } \Delta < \frac{1}{4a'e^{a'/2}+6e^{a'}} (a-a')
\] is sufficient to imply $\psi+\psi_1\in\mc V_a$.
\end{proof}
Now we show that, eventually decreasing the coupling strength, the $C^1$ norm of the second term on the RHS of \eqref{Eq:RecDiffT} is bounded by $O(\delta)\|\psi\|_{C^0}$  so that Lemma \ref{Lem:StrictInclusion} can be applied to the sum in \eqref{Eq:RecDiffT} and conclude that $PD\mc L_\phi(\psi)\in \mc V_a$ for sufficiently small values of $\delta$. Let's start by computing
\begin{align}
&\left[L_\phi\left(\frac{\phi(x)}{\int \phi}\int_{\T}H(x,y)\left[\psi-\phi\,\frac{\int\psi}{\int\phi} \right](y)dy\right)\right]'=\label{Eq:ExpressionDer}\\
&\quad\quad=\left[(g_\phi^{-1}(x))'\left(\frac{\phi(g_\phi^{-1}(x))}{\int \phi}\int_{\T}H(g_\phi^{-1}(x),y)\left[\psi-\phi\,\frac{\int\psi}{\int\phi} \right](y)dy\right)\right]'\nonumber\\
&\quad\quad=(g_\phi^{-1}(x))''\,\frac{\phi(g_\phi^{-1}(x))}{\int \phi}\,\int_{\T}H(g_\phi^{-1}(x),y)\left[\psi-\phi\,\frac{\int\psi}{\int\phi} \right](y)dy+\label{Eq:ExpressionDer1}\\
&\quad\quad\quad+(g_\phi^{-1}(x))'\,\frac{[\phi(g_\phi^{-1}(x))]'}{\int \phi}\,\int_{\T}H(g_\phi^{-1}(x),y)\left[\psi-\phi\,\frac{\int\psi}{\int\phi} \right](y)dy+\label{Eq:ExpressionDer2}\\
&\quad\quad\quad+(g_\phi^{-1}(x))'\,\frac{[\phi(g_\phi^{-1}(x))]}{\int \phi}\,\int_{\T}\partial_1[H(g_\phi^{-1}(x),y)]\left[\psi-\phi\,\frac{\int\psi}{\int\phi} \right](y)dy.\label{Eq:ExpressionDer3}
\end{align}
The goal is to upper bound the $C^1$ norm of the expression in \eqref{Eq:ExpressionDer}, therefore we need to upper bound the terms in the above sum together with their first derivative. In doing so we encounter: 1) terms $(g_\phi^{-1})'$, $(g_\phi^{-1})''$, $(g_\phi^{-1})'''$, that by Lemma \ref{Lem:BOundsGphi} are all upper bounded uniformly in $\phi$ and $\delta$, provided $|\delta|<\bar\delta$; 2) terms
\begin{align*}
\frac{\phi(g_\phi^{-1}(x))}{\int \phi}&\le \frac{\phi(g_\phi^{-1}(x))}{\phi(x_{int})}\le e^{a/2}\\
\frac{[\phi(g_\phi^{-1})]'(x)}{\int \phi}&=\frac{\phi'(g_{\phi}^{-1}(x))\cdot (g_\phi^{-1})'(x)}{\int \phi}\le C_\#ae^{a/2}\\
\frac{[\phi(g_\phi^{-1})]''(x)}{\int \phi}&=\frac{\phi''(g_{\phi}^{-1}(x))[(g_\phi^{-1})'(x)]^2+\phi'(g_{\phi}^{-1}(x))(g_\phi^{-1})''(x)}{\int \phi}\le C_\#'e^{a/2}(a+\alpha)
\end{align*}
 where we defined $x_{int}$ such that $\int\phi=\phi(x_{int})$,  and used  $\phi\in U_{a,\alpha}$; and 3) terms
\begin{equation}\label{Eq:Terms}
\int_{\T}\partial_1^iH(g_\phi^{-1}(x),y)\left[\psi-\phi\,\frac{\int\psi}{\int\phi} \right](y)dy\le C_\#''\|\psi\|_{C^0}\quad\quad i\in\{0,1,2\}.
\end{equation}
All  terms in expression  \eqref{Eq:ExpressionDer} are multiplied by one of the terms in \eqref{Eq:Terms}. The same holds for an expression of their derivatives. Putting together all these considerations, one can conclude that for some $C_\#'''>0$ that depends on $a$, $\alpha$, and the norm of $H$,
\[
\left\|\left[L_\phi\left(\frac{\phi(\cdot)}{\int \phi}\int_{\T}H(\cdot,y)\left[\psi-\phi\,\frac{\int\psi}{\int\phi} \right](y)dy\right)\right]'\right\|_{C^1}\le C_\#'''\|\psi\|_{C^0}.
\]
Composing by $P$ -- i.e. by the transfer operator of a $C^2$ uniformly expanding map that acts continuously on $C^2$ densities -- at most results in multiplication of the $C^1$ norm by a factor dependent on $P$ only.

Recall that $\psi\in \mc V_a$, therefore $\|\psi\|_{C^0}\le e^{a/2}\int \psi$. This allows us to apply  lemma \ref{Lem:StrictInclusion}, and from the above estimates it follows that one can pick $|\delta|$ sufficiently small to ensure that the second term in the RHS of \eqref{Eq:RecDiffT} is small enough so that $D\mc T_\phi(\psi)=PD\mc L_\phi(\psi)\in \mc V_a$.
\end{proof}

\begin{lemma}\label{Lem:ContDiff}
Assume $|\delta|\le \bar\delta$ with $\bar\delta$ as in Lemma \ref{Lem:BOundsGphi}. For any $a,\alpha\ge 0$ and all $\phi,\psi\in U_{a,\alpha}$ and $\xi\in \mc V_a$, the map $t\mapsto D\mc T_{t\phi+(1-t)\psi}\xi$ is continuous from $[0,1]$ to $C^0(\T,\R)$.
\end{lemma}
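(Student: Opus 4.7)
The plan is to unpack the explicit formula for $D\mc T_\phi(\xi)$ given by Proposition \ref{Lem:ExpressionforDifferential} and to verify that each ingredient depends continuously on the base point, combining standard facts about composition with and inversion of diffeomorphisms. Write $\phi_t := t\phi + (1-t)\psi$ for $t\in[0,1]$. By the convexity of $U_{a,\alpha}$ (being the intersection of two convex cones), the path stays in $U_{a,\alpha}\subset C^2(\T,\R^+)$; moreover $t\mapsto \phi_t$ is trivially continuous from $[0,1]$ into $C^2(\T,\R)$, and $\int\phi_t$ is affine and bounded away from zero on $[0,1]$. From \eqref{Eq:Expressgphi} it then follows that $t\mapsto g_{\phi_t}$ is continuous from $[0,1]$ into $C^3(\T,\T)$, and under $|\delta|\le \bar\delta$ Lemma \ref{Lem:BOundsGphi} provides uniform diffeomorphism bounds, so by the standard continuous dependence of inversion the map $t\mapsto g_{\phi_t}^{-1}$ is likewise continuous into $C^3$. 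Analogously, equation \eqref{Eq:Diffg} makes $t\mapsto Dg_{\phi_t}(\xi)$ continuous from $[0,1]$ into $C^3(\T,\R)$.

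By Proposition \ref{Lem:ExpressionforDifferential},
\[
D\mc T_{\phi_t}(\xi) \;=\; PL_{\phi_t}(\xi) \;-\; P\,\diverg\!\left(L_{\phi_t}\bigl[\phi_t\, Dg_{\phi_t}(\xi)\bigr]\right),
\]
so it suffices to prove that each summand is continuous from $[0,1]$ into $C^0(\T,\R)$. For the first summand, $L_{\phi_t}(\xi) = (\xi/|g_{\phi_t}'|)\circ g_{\phi_t}^{-1}$; since $\xi\in \mc V_a\subset C^1$ is fixed and $1/|g_{\phi_t}'|$ varies continuously in $C^2$ (thanks to the uniform lower bound on $|g_{\phi_t}'|$ guaranteed by Lemma \ref{Lem:BOundsGphi}), the chain and product rules give that $t\mapsto L_{\phi_t}(\xi)$ is continuous into $C^1$, a fortiori into $C^0$. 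For the second summand, $\phi_t\, Dg_{\phi_t}(\xi)$ is a product of paths continuous into $C^2$ and into $C^3$ respectively, hence continuous into $C^2$; applying $L_{\phi_t}$ preserves this continuity (once more via composition with $g_{\phi_t}^{-1}$ and division by $|g_{\phi_t}'|$) and yields a path continuous into $C^1$, after which spatial differentiation followed by the trace sends it to $C^0$ continuously. Finally, since $f\in C^3$ the operator $P\eta(x) = \sum_i \eta(f_i^{-1}(x))(f_i^{-1})'(x)$ is bounded linear on $C^0(\T,\R)$, so applying $P$ preserves continuity into $C^0$.

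The main obstacle is nothing conceptually subtle but the careful bookkeeping of uniform-in-$t$ bounds: one must check that a small perturbation of $\phi_t$ in $C^2$ produces a small perturbation of $g_{\phi_t}^{-1}$ in $C^3$ of comparable size, and then propagate this through the composition/multiplication/division operations using estimates of the type $\|\eta\circ h_1-\eta\circ h_2\|_{C^0}\le \|\eta'\|_{C^0}\|h_1-h_2\|_{C^0}$ together with the uniform lower bound on $|g_{\phi_t}'|$. All of this is made possible by Lemma \ref{Lem:BOundsGphi} together with the assumption $|\delta|\le \bar\delta$, and the conclusion of the lemma follows.
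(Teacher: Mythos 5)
Your argument is correct and follows essentially the same route as the paper: write $\phi_t=t\phi+(1-t)\psi$, use the explicit formula for $D\mc T_{\phi_t}(\xi)$ from Proposition \ref{Lem:ExpressionforDifferential}, establish continuity in $t$ of $g_{\phi_t}$, $g_{\phi_t}^{-1}$ and $Dg_{\phi_t}(\xi)$, and propagate through composition, multiplication, differentiation and the bounded operator $P$. The only two points the paper makes more explicit, and which you should too, are the triangle-inequality split $L_{\phi_t}\Xi_t-L_{\phi_{t'}}\Xi_{t'}=L_{\phi_t}(\Xi_t-\Xi_{t'})+(L_{\phi_t}-L_{\phi_{t'}})\Xi_{t'}$ (needed because both the operator and its argument vary with $t$), and the use of uniform continuity on the compact torus, rather than your Lipschitz estimate $\|\eta\circ h_1-\eta\circ h_2\|_{C^0}\le \|\eta'\|_{C^0}\|h_1-h_2\|_{C^0}$, when composing the merely continuous function $\xi'$ with $g_{\phi_t}^{-1}$ (since $\xi\in\mc V_a$ is only $C^1$).
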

\begin{proof}
An explicit expression for the differential can be found in equation \eqref{Eq:RecDiffT} and calling $\phi_t:=t\phi+(1-t)\psi$  and \[
\Xi_t:= \frac{\phi_t(x)}{\int \phi_t}\int_{\T}H(x,y)\left[\xi-\phi_t\,\frac{\int\xi}{\int\phi_t} \right](y)dy.
\]
we get
\begin{equation}\label{Eq:ExpDiffT}
 D\mc T_{t\phi+(1-t)\psi}\xi= PL_{\phi_t}\xi-\delta P[L_{\phi_{t}}\Xi_t]'.
\end{equation}

 By definition, we have
\[
L_{\phi_t}\xi=\frac{\xi}{g_{\phi_t}'}\circ g_{\phi_t}^{-1}.
\]
From Lemma \ref{Lem:Diffg} follows that $t\mapsto g_t$ is continuous from $[0,1]$ to $C^0(\T,\T)$, and the assumptions on $\delta$ ensure that also $t\mapsto (g_{\phi_t}')^{-1}$ and $t\mapsto g_{\phi_t}^{-1}$ are continuous. Since $\frac{\xi}{g_{\phi_t}'}$ is equicontinuous (as it is continuous and defined on a compact set), also $t\mapsto \frac{\xi}{g_{\phi_t}'}\circ g_{\phi_t}^{-1}$ is continuous from $[0,1]$ to $C^0$. Finally, since $P$ is a bounded operator from $C^0$ to $C^0$, we can conclude that $t\mapsto PL_{\phi_t}(\xi)$ is continuous. This proves that the first term in \eqref{Eq:ExpDiffT} is continuous in $t$. Before moving on, notice that if $\xi\in C^2(\T,\R)$, then one can analogously show that $t\mapsto L_{\phi_t}\xi$ is also continuous into $C^1(\T,\R
)$ as it can be easily verified by computing $(L_{\phi_t}\xi)'$.

Notice that $\Xi_t\in C^2$.
To treat the second term in \eqref{Eq:ExpDiffT}, notice that for every $t,t'$
\begin{align}
(L_{\phi_t}\Xi_t)'-(L_{\phi_{t'}}\Xi_{t'})' &= \left[(L_{\phi_t}\Xi_t)'-(L_{\phi_{t}}\Xi_{t'})'\right]+\left[(L_{\phi_t}\Xi_{t'})'-(L_{\phi_{t'}}\Xi_{t'})'\right] \label{Eq:TrIneqCont}
\end{align}
Given the regularity assumptions, $t\mapsto \Xi_t$ is continuous from $[0,1]$ to $C^1(\T,\R)$, and since $L_{\phi_t}$ is a bounded operator -- uniformly in $t$ -- from $C^1(\T,\R)$ to itself, we have that the first square bracket on the RHS of \eqref{Eq:TrIneqCont} can be made arbitrarily small in $C^0$ letting $t\rightarrow t'$.  For the second square bracket, we can use the observation made above that $t\mapsto L_{\phi_t}\xi$ is continuous to $C^1$ whenever $\xi \in C^2(\T,\R)$ and so also this term can be made arbitrarily small in $C^0$ when $t\rightarrow t'$. Again, since $P$ is bounded when acting on $C^0(\T,\R)$, also the second term in \eqref{Eq:RecDiffT} is continuous in $t$.
\end{proof}

\begin{proof}[Proof of Theorem \ref{Thm:UnifExpCoupMaps}]
By Lemma \ref{Lem:Invariance1}, there are $a$ and $\alpha$ such that: $U_{a,\alpha}$ is invariant under $\mc T$, and by Proposition \ref{Prop:Inclusi} the differential of $\mc T$ restricted to $U_{a,\alpha}$ sends $\mc V_a$ to itself and by Lemma \ref{Lem:ContDiff}, $t\mapsto D\mc T_{t\phi+(1-t)\psi}\xi$ is continuous. By Proposition \ref{Thm:OrderPreservationCondition}, the restriction of $\mc T$ to $U_{a,\alpha}$ is order preserving. Since $\mc T(U_{a,\alpha})\subset \mc V_{\lambda a}$ with $\lambda\in (0,1)$, recalling Lemma \ref{Lem:ContDiff} and \eqref{Eq:DiamLogLipFun}, it follows that $\mc T$ is a contraction.
\end{proof}

\subsection{An example of stability with  large coupling}
The work done in the previous subsection shows that if the coupling strength is sufficiently small, an STO arising from coupled uniformly expanding maps has a stable fixed point. Below we are going to show that, in specific examples, our framework allows to get explicit upper bounds for the coupling strength ensuring stability of the fixed point. In particular, we are able to deal with a class of examples where the coupling strength is so large that the STOs have multiple stable fixed points.

More precisely, let us consider the situation where the uncoupled dynamic is given by the map $f(x)=kx\mod1$ with $k>1$, and the coupling function $H$ is $C^3(\T\times\T,\R)$ and satisfies
\[
\int_\T H(x,y)dy=0\quad\quad\forall x\in \T.
\]
What follows is based on the fact that the averages of the coupling function and its derivatives with respect to the invariant measure of $f$ (i.e. the uniform Lebesgue measure) are zero.  It follows that the Lebesgue measure is invariant for the self-consistent transfer operator $\mc T$ for any value of the coupling strength.

\begin{proposition}\label{Prop:StabilityP}
Consider a system of coupled maps with $f$ and $H$ as above, and let $\mc T$ be the corresponding STO. If\footnote{We denote the $L^1$ norm by $\|\cdot\|_{1}$}
\[
|\delta|\, \max_{i=1,2}\max_{x\in\T}\|\partial_1^iH(x,\cdot)\|_1 < {2(k-1)}
\]
then there are $a_0,\alpha_0>0$ sufficiently small, such that the restriction of $\mc T$ to $\mc V_{a_0}\cap\mc C_{\alpha_0}$ is a contraction with respect to the Hilbert metric on $\mc V_{a_0}$. 
\end{proposition}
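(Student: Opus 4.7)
The strategy is to apply the scheme of Theorem \ref{Thm:UnifExpCoupMaps} to the cone $U_{a_0,\alpha_0} := \mc V_{a_0}\cap \mc C_{\alpha_0}$, tracking constants explicitly and exploiting two features of this specific setting: (i) for $f(x)=kx$ one has the explicit formula $P\phi(x) = k^{-1}\sum_{j=0}^{k-1}\phi((x+j)/k)$, hence $P(\mc V_a)\subset \mc V_{a/k}$ and $P(\mc C_\alpha)\subset \mc C_{\alpha/k^2}$ with explicit contraction factor $\lambda = 1/k$, and Lebesgue is already a fixed point of $\mc T$; (ii) differentiating $\int_\T H(x,y)\,dy = 0$ in $x$ yields $\int \partial_1^i H(x,y)\,dy = 0$ for every $i\ge 0$, so inside every integral appearing in $g_\phi$, $Dg_\phi$ and their $x$-derivatives one can subtract an arbitrary constant from the integrated density.

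First I would verify invariance of $U_{a_0,\alpha_0}$ under $\mc T$ for sufficiently small $a_0, \alpha_0$. The quantitative improvement over Lemma \ref{Lem:Invariance1} is that for any normalized $\phi\in \mc V_{a_0}$ one has $\bigl|\int \partial_1^i H(x,y)\phi(y)\,dy\bigr| = \bigl|\int \partial_1^i H(x,y)[\phi(y)-1]\,dy\bigr| \le \|\partial_1^i H(x,\cdot)\|_1\,\|\phi-1\|_\infty = O(a_0)$, so the derivatives of $g_\phi$ are $O(|\delta|a_0)$-perturbations of those of the identity. Hence $\mc L$ enlarges the log-Lipschitz constant only by a factor $1 + O(|\delta|a_0)$, and composition with the $1/k$-contraction of $P$ gives $\mc T(U_{a_0,\alpha_0})\subset \mc V_{\lambda a_0}\cap \mc C_{\alpha_0}\subset U_{a_0,\alpha_0}$ for some $\lambda<1$, provided $a_0$ and $|\delta|$ are small (the $\mc C_\alpha$-bookkeeping is analogous, with $P$ contributing a factor $1/k^2$).

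The core step is the analog of Proposition \ref{Prop:Inclusi}: for $\phi\in U_{a_0,\alpha_0}$ and $\psi\in \mc V_{a_0}$, show $D\mc T_\phi(\psi)\in \mc V_{a_0}$. In the decomposition \eqref{Eq:RecDiffT}, the first term $PL_\phi\psi$ lies in $\mc V_{(1+O(|\delta|a_0))a_0/k}$, leaving a log-Lipschitz budget of $a_0(k-1)/k - O(|\delta|a_0^2)$ for the perturbation coming from the second term before exiting $\mc V_{a_0}$. For the second term I would expand the derivative exactly as in \eqref{Eq:ExpressionDer}--\eqref{Eq:ExpressionDer3}; using $\int\partial_1^i H(x,y)\,dy = 0$, each inner integral $\int\partial_1^i H(x,y)[\psi - \phi\tfrac{\int\psi}{\int\phi}](y)\,dy$ can be bounded by $\|\partial_1^i H(x,\cdot)\|_1\,\|\psi\|_{C^0}(1+O(a_0+\alpha_0))$, with only $i=1,2$ surviving at leading order after the outer $x$-differentiation (the $i=0$ contribution vanishes at $\phi\equiv 1$ so carries an extra factor $O(a_0+\alpha_0)$). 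Collecting the constants produced by the explicit form of $P$, the $\mc C_{\alpha_0}$ bound on $\phi$, and Lemma \ref{Lem:StrictInclusion}, comparison with the available budget yields the sufficient condition $|\delta|\max_{i=1,2}\max_x\|\partial_1^i H(x,\cdot)\|_1 < 2(k-1)\,(1+O(a_0+\alpha_0))$, which is implied by the stated hypothesis once $a_0,\alpha_0$ are chosen small enough. Continuity of $t\mapsto D\mc T_{t\phi+(1-t)\psi}\xi$ in $C^0$ is inherited from Lemma \ref{Lem:ContDiff}; Proposition \ref{Thm:OrderPreservationCondition} then yields order preservation of $\mc T$ on $U_{a_0,\alpha_0}$, and Proposition \ref{Thm:ContractionProperties} together with \eqref{Eq:DiamLogLipFun} delivers the desired contraction in the Hilbert metric on $\mc V_{a_0}$.

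The main obstacle is the careful bookkeeping that converts the $L^\infty$-type estimates used in the proof of Proposition \ref{Prop:Inclusi} into the $L^1$-type estimates appearing in the hypothesis: it is precisely here that the mean-zero condition on $H$ is decisive, buying a factor of $\|\phi-1\|_\infty$ (hence a smallness in $a_0$) in place of $\|\phi\|_\infty$ and making the constant $\|\partial_1^i H(x,\cdot)\|_1$ appear naturally instead of $\|\partial_1^i H\|_\infty$.
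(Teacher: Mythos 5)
Your proposal follows essentially the same route as the paper: invariance of $U_{a_0,\alpha_0}$ and preservation of $\mc V_{a_0}$ by $D\mc T_\phi$ are both obtained by exploiting $\int_\T\partial_1^i H(x,y)\,dy=0$ to trade $\|\phi\|_\infty$ for $\|\phi-1\|_\infty=O(a_0)$ so that the $L^1$ norms $\|\partial_1^i H(x,\cdot)\|_1$ appear, and the conclusion is then assembled from Lemma \ref{Lem:StrictInclusion}, Proposition \ref{Thm:OrderPreservationCondition} and Proposition \ref{Thm:ContractionProperties} exactly as in the paper's Lemmas \ref{Lem:Invariance} and \ref{Lem:StudyDiff}. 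The one caveat is that in the paper the threshold $2(k-1)$ comes from the invariance step (requiring $k^{-1}\bigl(1+\delta\|\partial_1^2H\|_1/2\bigr)<1$), whereas your claim that the budget comparison in the differential step reproduces the same constant is optimistic once the factor $\tfrac{1}{6}$ from Lemma \ref{Lem:StrictInclusion} is inserted — but this is the same quantitative looseness already present in the paper's verification of \eqref{Eq:rhoC1bound}.
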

\begin{remark}
Notice that the condition on $\delta$ allows for linear transfer operators $PL_\phi$ corresponding to maps $f\circ g_{\phi}$ that are not uniformly expanding.   Uniform expansion is recovered provided that $\phi$ is sufficiently close to the constant density. As shown  below, this situation allows for the STO $\mc T$ to have multiple stable fixed points.
\end{remark}
\begin{example}\label{Ex:Example}
Consider $f(x)=5x\mod 1$ and $H(x,y)= \sin(2\pi x)\cos (2\pi y)$. Since $\max_{i=1,2}\max_{x\in\T}\|\partial_1^iH(x,\cdot)\|_1< 8\pi$, we impose that  $|\delta|< \pi^{ -1}$ so that the above theorem is satisfied and the uniform Lebesgue measure is fixed and stable (with respect to the Hilbert metric) for $\mc T$.   Notice that the Dirac measure $\delta_0$ is also fixed by $\mc T$, i.e. $\mc T\delta_0=\delta_0$, for every $\delta\in \R$, since $0$ is a fixed point of the map
\[
x\mapsto f\left(x+\delta \sin(2\pi x)\right)
\] 
for every $\delta\in \R$. Using Theorem 3.1 from \cite{ST2}, we are going to show that there is a range within $|\delta|<\pi^{-1}$ where the delta measure $\delta_0$ is a stable fixed point  in a neighborhood of measures with support contained in a sufficiently small interval around the point $0$. To apply this theorem, we need to consider
\[
G(x):= f\left(x+\delta \sin(2\pi x)\cos(2\pi x)\right)
\]
and 
\begin{align*}
g(x)&:=\partial_y\left[f(y+\delta \sin(2\pi y)\cos (2\pi x))\right]|_{y=x}\\
&=5(1+2\pi \delta\cos^2(2\pi x))
\end{align*}
and impose that: (1) $|G'(0)|<1$ and (2) $|g(0)|<1$. Theorem 3.1 of \cite{ST2} then implies that there is $\epsilon>0$ sufficiently small such that if $\mu$ is any measure on $\T$ whose support is contained in an arc containing $0$ and with diameter less than $\epsilon$, then
\[
\lim_{n\rightarrow \infty}d_W(\mc T^n\mu,\delta_0)= 0
\]
where $d_W$ is the Wasserstein metric.

Conditions (1) and (2) give the same restriction on $\delta$: 
\[
|5(1+2\pi\delta)|<1
\]
that is verified provided $\delta$ is sufficiently close to $-(2\pi)^{-1}$  and  this requirement is compatible with the condition $|\delta|<\pi^{-1}$ previously imposed. This implies the existence of a range for the coupling strength where the STO has at least two stable (in the sense specified above) fixed points: one is the uniform measure, and one is the Delta measure concentrated at zero.
\end{example}

We now turn back to  Proposition \ref{Prop:StabilityP}, whose proof will be given at the end of the section after several preliminary steps.
\begin{lemma}\label{Lem:Invariance}
There is $a_0\ge 0$, such that for every $a\in[0,a_0)$: 
\[
\mc T(\mc V_a\cap \mc C_\alpha)\subset \mc V_{\lambda a}\cap\mc C_\alpha
\]
for some $\lambda$ and $\alpha$ depending on $a$ and having the following expansions 
\[
\lambda= k^{-1}\left(1+ \frac{\delta\|\partial_1^2H\|_{1}}{2}\right) + o(1), \quad\mbox{and}\quad \alpha =a(C+o(1)),
\]
for some constant $C>0$.  Furthermore
\begin{equation}\label{Eq:Inclusion2}
PL_{\phi}\psi\in \mc V_{\lambda a}\quad\quad\forall \phi\in \mc V_a\cap\mc C_\alpha,\,\forall \psi\in \mc V_a.
\end{equation}
\end{lemma}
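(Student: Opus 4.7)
The proof turns on the hypothesis $\int_\T H(x,y)dy=0$, which by differentiating under the integral sign gives $\int\partial_1^iH(x,y)dy=0$ for $i=0,1,2,3$. Consequently, for $\phi\in\mc V_a$ normalised so that $\int\phi=1$, one can write
\begin{align*}
 g_\phi'(x)-1 &= \delta\int\partial_1H(x,y)(\phi(y)-1)dy,\\
 g_\phi''(x) &= \delta\int\partial_1^2H(x,y)(\phi(y)-1)dy,
\end{align*}
and analogously for $g_\phi'''$. The elementary bound $|\phi(y)-1|\le e^{a/2}-1=a/2+O(a^2)$ (valid for $\phi\in\mc V_a$ with $\int\phi=1$) then yields the sharpened estimates $g_\phi'=1+O(a\delta)$, $|g_\phi''(x)|\le\delta(a/2+O(a^2))\|\partial_1^2H(x,\cdot)\|_1$, and $g_\phi'''=O(a\delta)$, uniformly in $\phi\in\mc V_a$. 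This is the crucial quantitative improvement over the generic bounds in Lemma \ref{Lem:BOundsGphi}: every correction term now vanishes with $a$.

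For the $\mc V_{\lambda a}$ inclusion, I would plug these estimates into equation \eqref{Eq:FirstDeriv}. For $\psi\in\mc V_a$ this gives
\[
\left|\frac{(L_\phi\psi)'}{L_\phi\psi}\right|\le \frac{a}{|g_\phi'|\circ g_\phi^{-1}}+\frac{|g_\phi''|}{|g_\phi'|^2}\circ g_\phi^{-1}\le a+\tfrac{a\delta}{2}\max_{x\in\T}\|\partial_1^2H(x,\cdot)\|_1+o(a).
\]
Since $f(x)=kx$ has the explicit inverse branches $f_i^{-1}(x)=(x+i-1)/k$ with vanishing second derivatives, one directly verifies that $|(P\eta)'/P\eta|\le k^{-1}\sup|\eta'/\eta|$ for every positive $\eta\in C^1$. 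Composing these two contractions gives $PL_\phi\psi\in\mc V_{\lambda a}$ with $\lambda=k^{-1}(1+\delta\|\partial_1^2H\|_1/2)+o(1)$, proving both the main $\mc V$-inclusion and the auxiliary statement \eqref{Eq:Inclusion2}.

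For the $\mc C_\alpha$ inclusion, write $L_\phi\phi=h\circ g_\phi^{-1}$ with $h:=\phi/g_\phi'$ and compute $(L_\phi\phi)''/L_\phi\phi$ via the chain rule in terms of $h'/h$, $h''/h$, $(g_\phi^{-1})'$ and $(g_\phi^{-1})''$. Using $|\phi'/\phi|\le a$, $|\phi''/\phi|\le\alpha$, and the bounds on derivatives of $g_\phi$ from the first paragraph, every cross-term is of order $O(a\delta)$ or $O(a\delta\,\alpha)$, yielding $|(L_\phi\phi)''/L_\phi\phi|\le\alpha+K(a\delta+a^2)$ for a constant $K$ depending only on the $C^3$ norm of $H$. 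Since $f''\equiv 0$, the transfer operator satisfies the analogous stronger bound $|(P\eta)''/P\eta|\le k^{-2}\sup|\eta''/\eta|$, so $\mc T\phi\in\mc C_{(\alpha+K(a\delta+a^2))/k^2}$. Setting $\alpha=Ca$ and choosing $C$ sufficiently large so that $(C+K\delta+Ka)/k^2\le C$, which is achievable because $k^2>1$, closes the invariance and gives the asymptotics $\alpha=a(C+o(1))$ stated in the lemma.

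The main obstacle is bookkeeping: one must be careful to extract the precise coefficient $\|\partial_1^2H\|_1/2$ in $\lambda$, which requires expanding $e^{a/2}-1$ to first order rather than merely bounding it by an $O(a)$ term; and one must verify that $\mc C_\alpha$-invariance is compatible with $\alpha$ being forced to shrink linearly with $a$, so that the cross-terms of the form $\phi'\cdot g_\phi''\sim a^2\delta$ in $(L_\phi\phi)''$ remain subdominant to $\alpha=Ca$ rather than forcing $C$ to grow.
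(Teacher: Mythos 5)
Your proposal is correct and follows essentially the same route as the paper: exploit $\int\partial_1^iH(x,y)\,dy=0$ to get $O(a\delta)$ bounds on the derivatives of $g_\phi$, feed these into the explicit formulas for $(L_\phi\psi)'/L_\phi\psi$ and $(L_\phi\psi)''/L_\phi\psi$, use the exact contraction of $P$ for $f(x)=kx$ on the log-Lipschitz and $\mc C_\alpha$ cones, and close the invariance with $\alpha=Ca$. The only (harmless) differences are cosmetic: you bound $|\phi-1|$ by $e^{a/2}-1$ rather than $\tfrac a2 e^{a/2}$, and you use the sharp factor $k^{-2}$ for $P$ on $\mc C_\alpha$ where the paper states the weaker $k^{-1}$.
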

\begin{proof}
Without loss of generality let's assume that $\delta\ge 0$. Pick $\phi,\psi\in \mc V_a$
\begin{align*}
|(\log L_{\phi}(\psi))'|&=\left|\frac{\psi}{g_{\phi}'}\circ g_{\phi}^{-1}(x)\right|^{-1}\left|\frac{\psi'}{(g_{\phi}')^2}\circ g_{\phi}^{-1}(x)-\frac{\psi g_{\phi}''}{(g_{\phi}')^3}\circ g_{\phi}^{-1}(x)\right|\\
&\le a |(g_\phi')^{-1}|_{\infty}+\left|\frac{g_{\phi}''}{(g_\phi')^2}\right|_{\infty}.
\end{align*}
From the definition of $g_\phi$
\begin{align*}
g_{\phi}'(x)&=1-\delta\int\partial_1H(x,y)[\phi(y)-\phi_0(y)]dy\\
g_{\phi}^{(n)}(x)&=-\delta\int\partial_1^{(n)}H(x,y)[\phi(y)-\phi_0(y)]dy
\end{align*}
for $n=1,2$, where we used that $\phi_0(y):=1$ and $\int\partial_1H(x,y)dy=\int\partial_1^2H(x,y)dy=0$. By H\"older inequality
\begin{align*}
|(g_{\phi}')^{-1}|_\infty&\le \left[1-\delta\|\partial_1H\|_{1}|\phi_0-\phi|_{\infty}\right]^{-1}.
\end{align*}
Since $e^{-a/2}\le \phi \le e^{a/2}$
\begin{equation}\label{Eq:EstL1Norm}
|\phi_0-\phi|_{\infty}\le \frac{a}{2}e^{a/2},
\end{equation}
and therefore
\begin{equation}\label{eq:g'-1}
|(g_{\phi}')^{-1}|_\infty\le \left(1-\delta\|\partial_1H\|_{1}\frac{a}{2}e^{a/2}\right)^{-1}.
\end{equation}
Analogously, for $n=1, 2$
\begin{equation}\label{eq:g'n}
|(g_{\phi}^{(n)})|_\infty\le \delta\|\partial_1^{(n)}H\|_{1}\frac{a}{2}e^{a/2}.
\end{equation}
Combining all these estimates
\begin{align*}
|(\log L_{\phi}(\psi))'|&\le a \left(1-\delta\|\partial_1H\|_{1}\frac{a}{2}e^{a/2}\right)^{-1}+ \delta\|\partial_1^2H\|_{1}\frac{a}{2}e^{a/2} \left(1-\delta\|\partial_1H\|_{1}\frac{a}{2}e^{a/2}\right)^{-2}\\
&\le \left(1+ \frac{\delta\|\partial_1^2H\|_{1}}{2} +\delta o(1)\right) a,
\end{align*}
i.e. the factor multiplying $a$  can be made arbitrarily close to $ \left(1+ \frac{\delta\|\partial_1^2H\|_{1}}{2}\right)$ uniformly in $a\in[0,a_0]$ by picking $a_0$ sufficiently small.
A standard computation gives that the transfer operator for the uncoupled map $f$ satisfies $P\mc V_a\subset\mc V_{k^{-1}a}$. This implies that, as long as 
\[
k^{-1}\left(1+ \frac{\delta\|\partial_1^2H\|_{1}}{2}\right) <1,\mbox{ i.e. }\delta \|\partial_1^2H\|_1 < {2(k-1)}
\]
there is $a_0$ sufficiently small, such that for every $a\in[0,a_0]$, $\mc T\mc V_a\subset \mc V_{\lambda a}$ for some $\lambda\in (0,1)$ with 
\[
\lambda = k^{-1}\left(1+ \frac{\delta\|\partial_1^2H\|_{1}}{2}\right) + o(1). 
\] 
Which already proves \eqref{Eq:Inclusion2}.

 Now pick $\psi\in U_{a,\alpha}$,

\[
\begin{split}
( L_\phi\psi(x))''&=\left[\frac{\psi''}{(g'_\varphi)^3}-3 \frac{\psi'g_\varphi''}{(g'_\varphi)^4}-\frac{\psi g_\varphi'''}{(g'_\varphi)^4}+3\frac{\psi (g''_\varphi)^2}{(g'_\varphi)^5}\right]\circ g_\varphi^{-1}
\end{split}
\]

thus, recalling \eqref{eq:g'-1} and \eqref{eq:g'n}, we have
\begin{align*}
\frac{|( L_\phi\psi(x))''|}{L_\phi\psi}&\le \alpha |(g_{\phi}')^{-1}|_{\infty}^2+3a|g_{\phi}''|_{\infty}\cdot |(g_{\phi}')^{-1}|_{\infty}^3+\\
&+|g_{\phi}'''|_\infty\cdot |(g_{\phi}')^{-1}|_{\infty}^3+3|g_{\phi}''|^2_\infty\cdot |(g_{\phi}')^{-1}|_{\infty}^4\\
&\le \alpha+a \left[\frac{\alpha\delta\|\partial_1H\|_{1}}{2}+\delta\frac{|\partial_1^3 H|_\infty}{2}+o(1)\right].
\end{align*}

It is a quick computation to show that $P\mc C_\alpha\subset\mc C_{k^{-1}\alpha}$. Putting all the estimates together, it follows that for any $a<a_0$, there is $\alpha= O(a)$ such that 
\[
\mc T(\mc V_a\cap \mc C_{\alpha})\subset \mc V_{\lambda a}\cap \mc C_{\lambda'\alpha}
\]
for some $\lambda'\in(0,1)$.
\end{proof}
The following lemma shows that restricting the domain of $\mc T$ to $\phi$ in a cone $\mc V_a$, the deviation of $D\mc T_\phi$ from $PL_\phi$ is proportional to $\delta a$.
\begin{lemma}\label{Lem:StudyDiff}
There are $\alpha_0,\, a_0, \bar \delta>0$, there is $C>0$ such that for  $|\delta|<\bar\delta$, any $a\in [0,a_0]$, and any $\alpha\in[0,\alpha_0]$ defining
\[
\rho(x):=\left[L_\phi\left(\frac{\phi(\cdot)}{\int \phi}\int  H(\cdot,y)\left({\psi(y)}-\frac{\int \psi}{\int\phi}\phi(y)\right)dy\right)\right]'(x),
\]
 if $\phi\in U_{a,\alpha}$, $\psi\in \mc V_a$
 \[
\|\rho\|_{C^1}\le \max\{\|\partial_1H\|_1,\,\|\partial_1^2H\|_1\}\frac{a}{2}(1+o(1)).
\]
\end{lemma}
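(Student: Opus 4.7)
The plan is to expand $\rho = (L_\phi \Psi)'$ with $\Psi(u) := \frac{\phi(u)}{\int\phi}I_0(u)$ and $I_i(u) := \int_\T \partial_1^i H(u,y)\bigl[\psi(y) - \frac{\int\psi}{\int\phi}\phi(y)\bigr]dy$ for $i=0,1,2$, and then extract a factor of $a$ from each $I_i$ by using the hypothesis $\int H(x,y)\,dy = 0$, which by differentiating under the integral also gives $\int \partial_1^i H(x,y)\,dy = 0$ for $i=1,2$. From $L_\phi\Psi = (\Psi/g_\phi')\circ g_\phi^{-1}$ and the computation \eqref{Eq:FirstDeriv} one gets
\[
\rho = \Bigl[\tfrac{\Psi'}{(g_\phi')^2} - \tfrac{\Psi g_\phi''}{(g_\phi')^3}\Bigr]\circ g_\phi^{-1},
\]
and an analogous expression is obtained for $\rho'$ by a further direct differentiation, producing additional terms containing $\Psi''$, $g_\phi'''$, and higher powers of $(g_\phi')^{-1}$. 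Since everything is precomposed with $g_\phi^{-1}$, it suffices to estimate the bracketed quantities and their derivatives in $C^0$.

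The heart of the argument is the estimate on $I_i(u)$. Writing $\psi - \frac{\int\psi}{\int\phi}\phi = \int\psi\,(\tilde\psi - \tilde\phi)$ with $\tilde\psi,\tilde\phi \in \mc V_a$ of unit integral, the bound $\|\tilde\psi - 1\|_\infty, \|\tilde\phi - 1\|_\infty \le \frac{a}{2}e^{a/2}$ (cf.\ \eqref{Eq:EstL1Norm}) and the freedom to subtract any constant $c_u$ from the integrand (using $\int \partial_1^i H(u,y)\,dy = 0$) together give
\[
|I_i(u)| \le \|\partial_1^i H(u,\cdot)\|_1 \cdot \tfrac{a}{2}(1+o(1))\int\psi.
\]
This is the one step that invokes the zero-mean hypothesis, and it is the sole source of the linear-in-$a$ factor in the conclusion.

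Plugging in, and using the refined bounds $|g_\phi'-1|_\infty, |g_\phi^{(n)}|_\infty = O(\delta a)$ from the proof of Lemma \ref{Lem:Invariance} together with $|\phi'|_\infty, |\phi''|_\infty$ controlled by the $U_{a,\alpha}$ membership, the dominant contribution to $\|\rho\|_{C^0}$ is $\frac{\phi}{\int\phi}I_1/(g_\phi')^2$, which equals $\|\partial_1 H\|_1 \cdot \frac{a}{2}(1+o(1))$; every other term from $\Psi'$ or from $\Psi g_\phi''/(g_\phi')^3$ contains an extra $a$ (from $\phi'$) or an extra $\delta a$ (from $g_\phi^{(n)}$) and is therefore of order $a^2$ or $\delta a^2$, hence absorbed into $(1+o(1))$. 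The analogous accounting for $\|\rho'\|_{C^0}$ brings in $\Psi''$, whose leading contribution contains $I_2$ and so gives a $\|\partial_1^2 H\|_1 \cdot \frac{a}{2}(1+o(1))$ term; all other terms are again subleading. Taking the max over $i=1,2$ completes the proof.

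The main obstacle is simply the extent of the bookkeeping: the explicit formulas for $\rho$ and $\rho'$ are long, and each term has to be classified either as the leading $\frac{a}{2}\|\partial_1^i H\|_1$ piece or as a genuinely subleading $O(a^2) + O(\delta a)$ piece that can be absorbed into the error. No new analytic idea beyond the zero-mean trick and the uniform bounds of Lemma \ref{Lem:BOundsGphi} is required.
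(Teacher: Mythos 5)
Your argument is essentially the paper's own proof: the same decomposition of $\rho$ into the three terms \eqref{Eq:ExpressionDer1}--\eqref{Eq:ExpressionDer3}, the same use of the zero-mean property $\int_\T\partial_1^iH(x,y)\,dy=0$ combined with the closeness of the normalised densities to $\phi_0\equiv 1$ (cf.\ \eqref{Eq:EstL1Norm}) to extract the linear-in-$a$ factor, and the same classification of every other contribution as $O(a^2)$ or $O(\delta a^2)$ via the refined bounds $|g_\phi^{(n)}|_\infty=O(\delta a)$ from Lemma \ref{Lem:Invariance}. The one caveat, which you inherit from the paper itself (whose proof silently replaces the integrand $\psi-\phi\int\psi/\int\phi$ by $\phi-\phi_0$ at this step), is that bounding $\int\psi\,(\tilde\psi-\tilde\phi)$ with $\tilde\psi,\tilde\phi\in\mc V_a$ of unit integral naturally gives $a(1+o(1))\int\psi$ rather than $\tfrac a2(1+o(1))\int\psi$ — the best-constant subtraction you invoke still leaves half the oscillation of a \emph{difference} of two such densities, which is of order $a$ — so the constant in the conclusion is arguably off by a factor of $2$; this affects only the explicit threshold in Proposition \ref{Prop:StabilityP}, not the qualitative statement.
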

\begin{proof}
Proceeding as in the proof of Proposition \ref{Prop:Inclusi} we need to upper bound the modulus of $\rho$ and of its first derivative. An expression for $\rho$ can be found in \eqref{Eq:ExpressionDer}. We start bounding the terms appearing there to find an upper bound for $|\rho|$. Notice that every term in that sum contains a factor of the kind 
\[
\int_\T \partial_1^{(n)}H(x,y)[\phi(y)-\phi_0(y)]dy
\]
with $n\in\{0,1,2,3\}$, and, recalling \eqref{Eq:EstL1Norm}, 
\[
\left|\int_\T \partial_1^{(n)}H(x,y)[\phi(y)-\phi_0(y)]dy\right|_{\infty}\le \|\partial_1^{(n)}H\|_1 \frac{a}{2}e^{a/2}.
\]

First of all we show that  $|\rho|\le \|\partial_1^{(n)}H\|_1 \frac{a}{2}(1+o(1))$. This can be seen
bounding \eqref{Eq:ExpressionDer1} as
\begin{align*}
|(g_\phi^{-1})''|e^{a/2}C_\#\frac{a}{2}e^{a/2} \le a^2(C_{\#}+o(1))
\end{align*}
where we used 
\[
(g_{\phi}^{-1})''=\left(\frac{1}{g_\phi'(g_{\phi}^{-1})}\right)'=-\frac{g_{\phi}''(g_{\phi}^{-1})}{[g_{\phi}'(g_{\phi}^{-1})]^3}
\]
with $|g_{\phi}''|=\delta a(C_\#+o(1))$ and $|g_{\phi}'|=(1+o(1))$.
One can show that  \eqref{Eq:ExpressionDer2} is analogously a term of order $O(a^2)$ and the term in \eqref{Eq:ExpressionDer3} is bounded in absolute value by 
\[
\|\partial_1H\|_1 \frac{a}{2}[1+o(1])
\]
where one uses that $|(g_\phi^{-1}(x))'|$ and $|\frac{[\phi(g_\phi^{-1}(x))]}{\int \phi}|=1+o(a)$.
For the first derivative of $\rho$, one can bound $|\rho'|$ by showing that the derivatives of the terms \eqref{Eq:ExpressionDer1}  and \eqref{Eq:ExpressionDer2} are in absolute value of order $O(a^2)$, while the derivative of \eqref{Eq:ExpressionDer3} has leading order
\[
\|\partial_1^2H\|_1 \frac{a}{2}[1+o(1)].
\]   
\end{proof}
\begin{proof}[Proof of Proposition \ref{Prop:StabilityP}]
We apply the criterion in Proposition \ref{Thm:OrderPreservationCondition} with $U=U_{a,\alpha}$ for a suitable choice of $a$ and $\alpha$.  First of all pick $a_0$ sufficiently small so that Lemma \ref{Lem:Invariance} applies and $U$ can be showed to be invariant for every $a\in[0,a_0]$ and $\alpha=C(1+o(a))$. Notice that eventually decreasing $a_0$ and applying Lemma \ref{Lem:StudyDiff}
\begin{equation}\label{Eq:rhoC1bound}
\delta\|\rho\|_{C^1} < \frac{1}{4\lambda ae^{\lambda a}+6e^{\lambda a}} (a-\lambda a)\quad\forall a\in[0,a_0].
\end{equation}
Since $L_{\phi}\psi\in \mc V_{\lambda a}$, for every $\phi,\psi\in \mc V_a$, thanks to \eqref{Eq:rhoC1bound}, we can apply Lemma \ref{Lem:StrictInclusion} and obtain
\[
D\mc T_\phi(\psi)=L_{\phi}\psi+\delta \rho\in\mc V_a.
\]
By Proposition \ref{Thm:OrderPreservationCondition}, $\mc T$ is order preserving and a contraction on $U$.
\end{proof}

\section{ Metastable States for Coupled Maps with  Noise}

 In this section, we  give an example of noisy coupled maps with a unique stationary measure (for large finite $N$), but whose STO has a stable fixed point that is far from the unique stationary measure. This means that this stable fixed point does not approximate the asymptotic behaviour of the finite dimensional system, but it only relates to transient behaviour that can be made arbitrarily long taking larger and larger $N$.
 
\subsection{STO for Noisy Coupled Maps}

Consider $F:\T^N\rightarrow \T^N$ with $F=(F_1,...,F_N)$ as in \eqref{Eq:DetCoupledMaps}\footnote{For simplicity, as in the previous section, we consider coupled maps on $\T$, but similar results can be found for $\T^n$ with $n\in \N$. } 
and a system of noisy coupled maps
\begin{equation}\label{Eq:MarkovChainDef}
x_i(t+1)=F_i(x(t))+\eta_{i,t,F_i(x(t))} 
\end{equation}
with $x(t)=(x_1(t),...,x_N(t))\in \T^N$ and $\eta_{i,t,y}$ are i.i.d state dependent noise terms taking values on $\T$ and defined on a probability space $(\Omega,\mathbb P)$ with distribution $P(y,\cdot)$. More precisely, we mean that  \eqref{Eq:MarkovChainDef} defines a Markov chain $\{X(t)\}_{t=0}^{\infty}$ on $\T^N$, with transition probabilities $P_N(y,A)$, $y\in \T^N$ and $A\subset\T^N$, given by
\[
P_N(y,A)=\mb P(X(t+1)\in A|X(t)=y)= [P(F_1(y),\cdot)\otimes...\otimes P(F_N(y),\cdot)](A).
\]

Define the operator $M$ acting on measures
\[
(M\nu)(\cdot)=\int_{\T} P(y,\cdot)d\nu(y).
\]
The following proposition shows that the evolution of empirical distributions under \eqref{Eq:MarkovChainDef} when $N\rightarrow\infty$ is given by the STO \[
\mc T_{noisy}(\mu)=M\mc T(\mu)=M(f_\mu)_*\mu
\] which is the composition of $\mc T$, i.e. the STO associated to the deterministic coupled maps defined as in Definition \ref{Def:Self-ConsOp}, and the Markov operator $M$ associated to the noise.

\begin{proposition}\label{Prop:STOnoisyCoup}
Assume that $f$ and $H$ are Lipschitz and $y\mapsto P(y,\cdot)$ is Lipschitz from $\T$ with the Euclidean metric to the set of Borel measures with the Wasserstein distance\footnote{Defined as
\[ 
d_W(\mu,\nu)=\sup_{\substack{\phi\in \Lip_1(\T)},\,{ |\phi|_\infty\le 1}}\int_\T\phi(s)[d\nu(s)-d\mu(s)]
\]
where $\Lip_1$ denotes the set of real valued Lipschitz functions on $\T$ with Lipschitz constant less than 1.}. Suppose that $\{x_i\}_{i\in \N}$ is such that $\lim_{N\rightarrow \infty}\frac1N\sum_{j=1}^\infty \delta_{x_j}=\mu\in \mc M_1(\T)$ weakly, then almost surely and weakly
\[
\lim_{N\rightarrow \infty}\frac1N\sum_{j=1}^N\delta_{F_j(x)+\eta_{i,t,F_j(x)}}=M(f_\mu)_*\mu
\]
where $x=(x_1,...,x_N)$ and $f_\mu=f\circ g_\mu$ as in Definition \ref{Def:Self-ConsOp}.
\end{proposition}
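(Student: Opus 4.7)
The plan is to prove the conclusion in three stages: first show that the deterministic empirical measure of the post-coupling positions $\{F_j(x)\}$ converges weakly to $(f_\mu)_*\mu$; second deduce that applying the Markov operator $M$ commutes with this limit using the Lipschitz hypothesis on $y\mapsto P(y,\cdot)$; third control the additional randomness from the noise via a strong law of large numbers for independent bounded variables.

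For the first step, set $\hat\mu_N:=\frac1N\sum_{i=1}^N\delta_{x_i}$, which by hypothesis converges weakly to $\mu$. Since $H$ is Lipschitz on $\mathbb{T}\times\mathbb{T}$, the family $\{y\mapsto H(x,y)\}_{x\in\mathbb{T}}$ is equibounded and equicontinuous, and a standard argument combining weak convergence with equicontinuity yields
\[
\int_{\mathbb{T}} H(x,y)\,d\hat\mu_N(y)\;\longrightarrow\;\int_{\mathbb{T}} H(x,y)\,d\mu(y)
\]
uniformly in $x$. From the definition of $g_\mu$ and continuity of $f$, this gives $f_{\hat\mu_N}:=f\circ g_{\hat\mu_N}\to f_\mu$ uniformly. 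Since $F_j(x)=f_{\hat\mu_N}(x_j)$, the empirical measure $\hat\sigma_N:=\frac{1}{N}\sum_{j=1}^N\delta_{F_j(x)}$ is $\|f_{\hat\mu_N}-f_\mu\|_\infty$-close in Wasserstein distance to $(f_\mu)_*\hat\mu_N$, and the latter converges weakly to $(f_\mu)_*\mu$ by the continuous mapping theorem; therefore $\hat\sigma_N\to(f_\mu)_*\mu$ weakly.

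For the second step, the Lipschitz hypothesis on $y\mapsto P(y,\cdot)$ makes $M$ weakly continuous: for every Lipschitz $\phi:\mathbb{T}\to\mathbb{R}$ with $|\phi|_\infty\le 1$, duality with the Wasserstein distance forces $y\mapsto\int\phi\,dP(y,\cdot)$ to be Lipschitz, so
\[
\int\phi\,d(M\hat\sigma_N)=\int\!\!\int\phi(s)\,P(y,ds)\,d\hat\sigma_N(y)\;\longrightarrow\;\int\phi\,d(M(f_\mu)_*\mu),
\]
i.e.\ $M\hat\sigma_N\to M(f_\mu)_*\mu$ weakly.

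The third and main step handles the noise. Fix a Lipschitz $\phi:\mathbb{T}\to\mathbb{R}$ with $|\phi|_\infty\le 1$ and, conditionally on the deterministic configuration $x$, define
\[
Z_j:=\phi\bigl(F_j(x)+\eta_{j,t,F_j(x)}\bigr)-\int\phi(F_j(x)+s)\,P(F_j(x),ds).
\]
By the independence of the noise in $j$, these $Z_j$ are independent, mean-zero and uniformly bounded by $2$, so Kolmogorov's strong law for independent bounded variables (or Hoeffding's inequality combined with Borel--Cantelli) gives $\frac{1}{N}\sum_j Z_j\to 0$ almost surely. Writing $\hat\nu_N:=\frac{1}{N}\sum_j\delta_{F_j(x)+\eta_{j,t,F_j(x)}}$ and noting that $\frac{1}{N}\sum_j\int\phi(F_j(x)+s)P(F_j(x),ds)=\int\phi\,d(M\hat\sigma_N)$, Step 2 yields $\int\phi\,d\hat\nu_N\to\int\phi\,d(M(f_\mu)_*\mu)$ a.s. The main obstacle I anticipate is that this exceptional null set depends on $\phi$; to upgrade to a.s.\ \emph{weak} convergence of $\hat\nu_N$ I would pick a countable family of Lipschitz functions dense in $C^0(\mathbb{T})$ (e.g.\ trigonometric polynomials with rational coefficients), intersect the countably many full-measure events, and use the bound $\hat\nu_N(\mathbb{T})=1$ together with density to extend the convergence to all bounded continuous test functions.
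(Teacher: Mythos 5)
Your proof is correct and follows essentially the same route as the paper's: a strong law of large numbers for the independent, bounded noise contributions, combined with the deterministic convergence $F_j(x)\to f_\mu(x_j)$ (uniform in $j$, from the weak convergence of the empirical measure and the Lipschitz regularity of $H$) and the Lipschitz continuity of $y\mapsto P(y,\cdot)$ to identify the limit of the conditional expectations as $\int\phi\, d(M(f_\mu)_*\mu)$. Your final remark on intersecting the full-measure events over a countable dense family of Lipschitz test functions is a point the paper leaves implicit, and it is the right way to upgrade from a.s.\ convergence for each fixed $\phi$ to a.s.\ weak convergence.
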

\begin{proof}
Let $\bo x:=\{x_n\}_{n\in \N}$ be fixed and satisfy the assumptions.  Pick any $1$-Lipschitz function $\phi$ such that $|\phi|_\infty<1$. Let's make explicit for a moment the dependence on $N$ of the notation and write: $F^{(N)}=(F_1^{(N)},...,F_N^{(N)}):\T^N\rightarrow \T^N$ in place of $F$, and $x^N=(x_1,...,x_N)$. Define the triangular array of random variables $\{Z_{j}^N\}_{N=1, j=1}^{\infty,N}$ on the probability space $(\T^\N\otimes \Omega, \,\Leb_\T^{\otimes \N}\otimes \mathbb P)$ with $Z_{j}^N:\T^\N\times \Omega \rightarrow \R$
\[
Z^N_j:=\phi\left(F^{(N)}_j(x^N)+\eta_{i,t,F^{(N)}_j(x^N)}\right)
\]
where for $(\bo x,\omega)\in \T^\N\times\Omega$, $x^N$ is the projection of $\bo x$ on the first $N$ coordinates.  These random variables are independent and have finite moments of all orders. One can therefore apply the strong law of large numbers for triangular arrays and obtain that $\Leb_\T^{\otimes \N}\otimes \mathbb P$-a.s.
\begin{equation}\label{Eq:LLNTrArrays}
\lim_{N\rightarrow\infty}\frac1N\sum_{j=1}^N\left(Z^N_j-\mb E[Z^N_j]\right)=0.
\end{equation}

From the assumption on $\bo x$, for any $\epsilon>0$ there is $N_0\in \N$  such that for any $N\ge N_0$, $d_W(\frac1N\sum_{j=1}^N \delta_{x_j},\mu)<\|H\|_{C^1}^{-1}\epsilon$; this ensures that  for every $N\ge N_0$ and any $j\in\{1,...,N\}$,
 $|F^{(N)}_j(x^N)-f_\mu(x_j)|<\epsilon$ and
 \begin{align*}
 \frac1N\sum_{j=1}^N\mb E[Z^N_j]&=\frac1N\sum_{j=1}^N\int \phi(F^{(N)}_j(x^N)+\eta)P(F^{(N)}_j(x^N),d\eta) \\
 &=\frac1N\sum_{j=1}^N\int \phi(f_\mu(x_j)+\eta)P(f_\mu(x_j),d\eta)+O(\epsilon)
 \end{align*}
 where in the last estimate we used the regularity properties of $\phi$ and $y\mapsto P(y,\cdot)$.
Now,
\[
\frac1N\sum_{j=1}^N\int \phi(f_\mu(x_j)+\eta)P(f_\mu(x_j),d\eta)=\int\phi(s)\,d\left( M\frac{1}{N}\sum_{j=1}^N\delta_{f_\mu(x_j)}\right)(s).
\]
By regularity of $f$ and $H$, recalling the definition of $f_\mu$,
\[
\frac{1}{N}\sum_{j=1}^N\delta_{f_\mu(x_j)}=\frac{1}{N}\sum_{j=1}^N(f_\mu)_*\delta_{x_j}\rightarrow (f_\mu)_*\mu
\]
and the continuity of $M$ implies
\[
\frac{1}{N}\sum_{j=1}^N\delta_{f_\mu(x_j)}\rightarrow M(f_\mu)_*\mu.
\]

Putting all these estimates together, for all $\epsilon>0$ there is $N_0'$ such that for every $N\ge N_0'$
\[
\frac1N\sum_{j=1}^N\mb E[Z^N_j]=\int \phi(s)d(M(f_\mu)_*\mu)(s)+O(\epsilon)
\]
which together with \eqref{Eq:LLNTrArrays} implies
\[
\lim_{N\rightarrow \infty} \frac{1}{N}\sum_{j=1}^N\phi\left(F^{(N)}_j(x^N)+\eta_{i,t,F^{(N)}_j(x^N)}\right)=\int \phi(s)d(M(f_\mu)_*\mu)(s)
\]
and this implies the statement of the proposition. 

\end{proof}

\subsection{An Example with Metastable States}

\paragraph{Deterministic Evolution} 
Consider the (deterministic) system of coupled maps  from Example \ref{Ex:Example} where recall that  the  coordinates are $x(t)=(x_1(t),...,x_N(t))\in\T^N$ and their time evolution is prescribed by 
\begin{equation}\label{Eq:NoisyCoupledSystem}
    x_i(t+1)=F_i(x(t)):=k\left( x_{i}(t)+\delta\sin(2\pi x_i(t))\frac{1}{N} \sum_{j=1}^N\cos(2\pi x_j(t))\right)\mod 1
\end{equation}
where in the aforementioned example $k=5$.

 For certain values of $\delta$, one can show that $0\in \T^n$ is an attracting fixed point with a prescribed rate of contraction.
\begin{lemma}\label{Lem:BasinFixedPoint}
For $\delta\in\left(-\frac{2+3k}{6\pi k},-\frac{3k-2}{6\pi k}\right)$ and $N$ large enough, $F:\T^N\rightarrow \T^N$ has a uniformly attracting fixed point at $0\in\T^N$. Furthermore, there is $\Delta>0$\footnote{Here, and from now on, $\Delta>0$ is always assumed to be small enough so that $[-\Delta,\Delta]$ is a small arc around 0.}, such that the set $B_\Delta=[-\Delta,\Delta]^N\subset\T^N$ is contained in the basin of attraction of $0$, and for every $i$ and $x=(x_1,...,x_N)\in B_\Delta$
\begin{equation}\label{Eq:Cont}
d_\T(F_i(x),0)\le \frac23 d_\T (x_i,0). 
\end{equation}
\end{lemma}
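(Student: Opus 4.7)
The plan is to verify that $0 \in \T^N$ is a fixed point of $F$, compute the Jacobian $DF(0)$, and then use a Taylor expansion with $N$-uniform error control to extract the per-coordinate contraction on $B_\Delta$.

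First, $F_i(0)=0$ is immediate because $\sin 0=0$. Differentiating \eqref{Eq:NoisyCoupledSystem} one finds
\[
\frac{\partial F_i}{\partial x_i}(0)=k\bigl(1+2\pi\delta\bigr),\qquad \frac{\partial F_i}{\partial x_j}(0)=0\quad (j\neq i),
\]
since every off-diagonal partial carries a vanishing factor $\sin(2\pi\cdot 0)$. An elementary algebraic check shows that the hypothesis $\delta\in\bigl(-\frac{2+3k}{6\pi k},-\frac{3k-2}{6\pi k}\bigr)$ is exactly equivalent to $|k(1+2\pi\delta)|<\tfrac23$, so the Jacobian at $0$ is diagonal with all entries of modulus strictly below $\tfrac23$, uniformly in $N$.

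The next step is to promote this infinitesimal contraction to a uniform pointwise estimate on $B_\Delta$. Set $c_N(x):=\frac{1}{N}\sum_{j=1}^N\cos(2\pi x_j)$. Term by term, $|1-\cos(2\pi x_j)|\le 2\pi^2 x_j^2\le 2\pi^2\Delta^2$ for $x\in B_\Delta$, so $|c_N(x)-1|\le 2\pi^2\Delta^2$ with a bound independent of $N$. Taylor-expanding $\sin(2\pi x_i)=2\pi x_i+r(x_i)$ with $|r(x_i)|\le \tfrac{4\pi^3}{3}|x_i|^3$ and substituting into \eqref{Eq:NoisyCoupledSystem} yields
\[
F_i(x)=k(1+2\pi\delta)\,x_i+R_i(x),\qquad |R_i(x)|\le C\bigl(\Delta^2+|x_i|^2\bigr)|x_i|,
\]
for a constant $C$ depending only on $k$ and $\delta$. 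Since $|k(1+2\pi\delta)|<\tfrac23$ strictly, choosing $\Delta$ small enough so that $|k(1+2\pi\delta)|+2C\Delta^2\le\tfrac23$ gives $|F_i(x)|\le\tfrac23|x_i|$ for every $i$ and every $x\in B_\Delta$.

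Finally, because $|F_i(x)|\le\tfrac23\Delta<\tfrac12$ when $\Delta$ is small, the reduction mod $1$ does not affect magnitudes: $d_\T(F_i(x),0)=|F_i(x)|$ and $d_\T(x_i,0)=|x_i|$, which proves \eqref{Eq:Cont}. Invariance of $B_\Delta$ under $F$ is automatic from $|F_i(x)|\le\tfrac23\Delta\le\Delta$, and iterating the per-coordinate contraction gives $F^n(x)\to 0$ for every $x\in B_\Delta$, so $0$ is attracting with basin containing $B_\Delta$. The main point to track carefully is the uniformity in $N$ of the error constant $C$; this is secured by the observation that the only $N$-dependent quantity, $c_N(x)$, is an average of uniformly bounded cosines and therefore inherits the $N$-independent deviation bound $|c_N-1|\le 2\pi^2\Delta^2$ on $B_\Delta$.
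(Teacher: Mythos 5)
Your proof is correct and follows essentially the same route as the paper: both hinge on the elementary equivalence of the stated range of $\delta$ with $|k(1+2\pi\delta)|<\tfrac23$ and on showing that on $B_\Delta$ the nonlinear correction to the diagonal linear part is an $O(\Delta^2)$ relative error. The only difference is technical: the paper derives \eqref{Eq:Cont} from the identity $F_i(x)=0$ on $\{x_i=0\}$ together with the bound $|\partial_{x_i}F_i|\le\tfrac23$ on $B_\Delta$ (invoking ``$N$ large''), whereas your explicit Taylor remainder for $\sin$ and for the empirical average of cosines yields the same contraction uniformly in $N$.
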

 \begin{proof}
\[
\partial_{x_i}F_i(x_1,...,x_n)=k\left(1+\delta 2\pi \cos(2\pi x_i)\left[\frac{1}{N} \sum_{j=1}^N\cos(2\pi x_j)\right]\right)-\frac{k 2\pi \delta}{N}\sin(2\pi x_i)^2
\] 
With the choice 
\[
\delta\in\left(-\frac{2+3k}{6\pi k},-\frac{3k-2}{6\pi k}\right)
\]
one has $k(1+2\pi\delta)<2/3$, and  there is $\Delta>0$ small enough such that 
\[
k(1+\delta 2 \pi \cos(2\pi \Delta)\cos(2\pi \Delta))< \frac23.
\]
 For any $x\in B_\Delta=[-\Delta,\Delta]^N$ and $N$ large enough
\[
|\partial_{x_i}F_i(x)|\le \frac23.
\]
This and the fact that 
\[F_i(x_1,..,x_{i-1},0,x_{i+1},...,x_N)=0
\] for every $x_1,\,...,x_{i-1},\,x_{i+1},...,x_N$ implies that $0$ is an attracting fixed point and that \eqref{Eq:Cont} is satisfied. 
 \end{proof}

\begin{remark}
For $k=5$, the values of $\delta$ in the lemma above satisfy $|\delta|< \pi^{-1}$. In this range for $\delta$, recalling the computations in Example \ref{Ex:Example}, the STO  has the Lebesgue measure as a stable fixed point. 
\end{remark}

\paragraph{ Random Evolution} We now consider the random discrete time equations 
\[
x_i(t+1)= F_i(x(t))+ \eta_{i,t,F_i(x(t))}
\]
 with $F_i$ as above and we will show that for a suitable choice for the noise, for every large enough but finite $N$, the coupled noisy system gives rise to a geometric Markov chain with 
a unique stationary measure supported on $B_\Delta$, but the corresponding STO has at least one stable fixed point that is unrelated to the $N\rightarrow \infty$ limit of this measure.

 Let's call $P(x,\cdot)$ the transition probability associated to this noise, i.e. 
 \[
 P(x,A)=\mathbb P(\eta_{i,t,x}\in A)
 \] 
 and $M$ the associated generator:
 \[
 M \nu(\cdot):=\int P(x,\cdot)d\nu(x).
 \]
 Then, as showed above, the STO for this noisy system is just the composition of $\mc T$, the STO associated to the deterministic part, and the generator for the noise: 
 \[
\mc T_{noisy}\nu=  M\mc T\nu .
 \]

 Before making explicit the expression for $P(x,\cdot)$, let's first choose a smooth density  $\tilde \xi:\T\rightarrow\R$ that is supported on $[-\Delta/3,\Delta/3]$,  $\Delta$  as in Lemma \ref{Lem:BasinFixedPoint}, and that is symmetric around $0$. For example, we can take the normalised bump function
 \[
\tilde \xi(z)= \left\{ \begin{array}{ll}c_{\tilde \xi}\exp\left[-(\Delta^2/9-z^2)^{-1}\right] & z\in(-\Delta/3,\Delta/3)\\
0 & \mbox{otherwise}
\end{array}
\right.
 \]
  where $c_{\tilde \xi}>0$ is a normalising constant.
 
 Now let's define the properties that $ P(x, \cdot)$ should satisfy;  calling $\xi(x,\cdot)$ its density, we impose that: 
 
 a.1) $ \xi(x,y)=\tilde \xi(x-y)$, with $\tilde \xi$ as above, for $x\in [-\frac{2}{3}\Delta,\frac{2}{3}\Delta]$;
 
  \noindent and that: there is $\gamma>0$ such that for every $x\in\T$: 
 
 a.2) $\int_{[-\Delta,\Delta]} \xi(x,y)dy>\gamma$;
 
 a.3) $\| \xi(\cdot,y)-\tilde \xi(\cdot,y)\|_{C^2}=O(\gamma)$ for every $y\in \T$, where we denoted $\tilde \xi(x,y):=\tilde \xi(x-y)$.


Condition a.1) together with \eqref{Eq:Cont} and the fact that $\tilde \xi$ is supported on $[-\Delta/3,\Delta/3]$ ensures that a random orbit that enters $[-\Delta,\Delta]^N$ will be trapped there forever.
Condition a.2) ensures that every random orbit has probability larger than $\gamma^N>0$ of ending in  $[-\Delta,\Delta]^N$ in one step.
 Condition a.3) ensures that the two kernels are close for $\gamma$ small, and so are their associated generators; this will be instrumental in showing that the STO has a stable fixed point close to the uniform Lebesgue measure. 

\begin{example} Fix a smooth bump function
$\iota(x)$ that takes values on $[0,1]$,  is equal to zero for $x\in[-2\Delta/3,2\Delta/3]$, and is equal to one on $x\in [-2\Delta/3,2\Delta/3]^c$. 
Then define
\[
\xi(x,y)= \left(1-\frac{\gamma\iota(x)}{2\Delta}\right)\tilde\xi(x-y)+\frac{\gamma\iota(x)}{2\Delta}.
\]
With this choice, $\xi(x,\cdot)$ is a linear interpolation between $\tilde \xi(x-\cdot)$ and the uniform density. This  immediately ensures that a.1) and a.2) are satisfied. Also a.3) can be easily verified noticing that
\[
\xi(x,y)=\tilde\xi(x-y)+\gamma\left[\frac{\iota(x)}{2\Delta}-\frac{\iota(x)}{2\Delta}\tilde\xi(x-y)\right]
\]
and that the $C^2$ norm of the expression inside the square bracket does not depend on $\gamma$.
\end{example}

\begin{proposition}\label{Prop:ExMetast}
For  $\delta\in\left(-\frac{2+3k}{6\pi k},-\frac{3k-2}{6\pi k}\right)$ the two following facts hold:
\begin{itemize}
\item[i)] For  $N\in \N$ large enough, the Markov chain prescribed by the noisy coupled maps in \eqref{Eq:NoisyCoupledSystem}  is geometrically ergodic and its unique stationary measure is supported on $[-\Delta,\Delta]^N$;
\item[ii)] For $\gamma$ (as in a.2 and a.3 above) sufficiently small the STO associated with the noisy coupled maps \eqref{Eq:NoisyCoupledSystem} has a stable fixed measure with $\log$-Lipschitz density; for $\gamma\rightarrow 0$ its density tends to the Lebesgue measure in the $\infty$-topology.
\end{itemize}
\end{proposition}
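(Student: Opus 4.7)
The plan is to handle parts (i) and (ii) by separate arguments: (i) uses a Doeblin/minorization argument, while (ii) reuses the cone-contraction machinery of Section 4 applied to the operator $\mc T_{noisy}=M\mc T$.

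For (i), the key structural observation is that by Lemma \ref{Lem:BasinFixedPoint} one has $F(B_\Delta)\subset[-\tfrac{2}{3}\Delta,\tfrac{2}{3}\Delta]^N$, while by condition (a.1) the noise density $\xi(y,\cdot)$ for $y\in[-\tfrac{2}{3}\Delta,\tfrac{2}{3}\Delta]$ equals $\tilde\xi(\cdot-y)$ and is therefore supported in $y+[-\Delta/3,\Delta/3]$. Combined, this shows that $B_\Delta$ is absorbing for the noisy chain. Condition (a.2) in turn provides the uniform lower bound $P_N(x,B_\Delta)\ge\gamma^N$ for every $x\in\T^N$, and (a.1) ensures that on $B_\Delta$ the one-step transition density is bounded below on a fixed open set. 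A standard Doeblin-type result (e.g.\ Meyn--Tweedie) then yields existence, uniqueness and geometric convergence in total variation of the stationary measure, which is forced to be supported on $B_\Delta$ by absorption.

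For (ii), I would begin by decomposing $M=M_0+R_\gamma$, where $M_0$ is convolution with the translation-invariant kernel $\tilde\xi$. By condition (a.3) the remainder $R_\gamma$ has $L^1\to C^2$ operator norm of order $O(\gamma)$. A direct computation using $|\phi'|\le a\phi$ for $\phi\in\mc V_a$ (and the analogous bound on $\phi''$ for $\phi\in\mc C_\alpha$) shows that $M_0\mc V_a\subset\mc V_a$ and $M_0\mc C_\alpha\subset\mc C_\alpha$. Combined with Proposition \ref{Prop:StabilityP}, which gives $\mc T(U_{a,\alpha})\subset\mc V_{\lambda a}\cap\mc C_{\lambda'\alpha}$ for suitable $a,\alpha$ and $\lambda,\lambda'<1$, this yields $\mc T_{noisy}(U_{a,\alpha})\subset U_{a,\alpha}$ as soon as $\gamma$ is small enough that the $O(\gamma)$ defect introduced by $R_\gamma$ does not spoil the strict inclusion. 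For order preservation via Proposition \ref{Thm:OrderPreservationCondition}, I use linearity of $M$ to write $D\mc T_{noisy,\phi}(\psi)=M\cdot D\mc T_\phi(\psi)$; the decomposition $D\mc T_\phi(\psi)=PL_\phi\psi+\delta\rho$ from the proof of Proposition \ref{Prop:StabilityP} together with the $C^1$ bound on $\rho$ from Lemma \ref{Lem:StudyDiff} and Lemma \ref{Lem:StrictInclusion} then places $M\, D\mc T_\phi(\psi)\in\mc V_a$ for $\delta$ and $\gamma$ small enough, while continuity of $t\mapsto D\mc T_{noisy,t\phi+(1-t)\psi}\xi$ is inherited from Lemma \ref{Lem:ContDiff} via boundedness of $M$. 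Proposition \ref{Thm:ContractionProperties} applied to the inclusion $\mc V_{\lambda a}\hookrightarrow\mc V_a$ then produces a strict Hilbert-metric contraction, and hence a unique attracting fixed density $\phi_\gamma\in U_{a,\alpha}$.

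To establish $\phi_\gamma\to\phi_0\equiv 1$ in the $C^0$ topology as $\gamma\to 0$, note that $\mc T(\phi_0)=\phi_0$ (by $\int H(x,y)dy=0$) and $M_0\phi_0=\phi_0$ (convolution of a constant with a probability density), so
\[
\mc T_{noisy}(\phi_0)-\phi_0=(M-M_0)\mc T(\phi_0)=R_\gamma\phi_0=O(\gamma)\quad\text{in }C^0.
\]
A standard perturbation estimate for the fixed point of a contraction, translated from the Hilbert pseudometric to $C^0$ via the a priori bound $e^{-a/2}\le\phi\le e^{a/2}$ on normalised densities in $\mc V_a$, then yields $\|\phi_\gamma-\phi_0\|_{C^0}=O(\gamma/(1-c))$, where $c<1$ is the contraction rate. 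The principal obstacle is making the cone parameters and the contraction rate uniform in $\gamma$ on a small interval $[0,\gamma_0]$; this uniformity follows from the explicit $O(\gamma)$ control on $R_\gamma$ coming from (a.3) combined with the explicit estimates already carried out in Section 4.
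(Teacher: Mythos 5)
Your argument for part (ii) is essentially the paper's: the paper introduces exactly your $M_0$ (there called $\tilde M$, convolution with $\tilde\xi$), checks $\tilde M\mc V_{a'}\subset\mc V_{a'}$ and $\tilde M\mc C_{\alpha'}\subset\mc C_{\alpha'}$, absorbs the $O(\gamma)$ remainder via Lemma \ref{Lem:StrictInclusion}, composes with the deterministic inclusions $\mc T U_{a,\alpha}\subset U_{a',\alpha'}$ and $D\mc T|_{U_{a,\alpha}}(\mc V_a)\subset\mc V_{a'}$, and invokes Proposition \ref{Thm:OrderPreservationCondition}. Your treatment of the limit $\gamma\to 0$ is in fact more explicit than the paper's (which appeals generically to ``statistical stability for Markov chains''): quantifying the defect $\mc T_{noisy}\phi_0-\phi_0=R_\gamma\phi_0=O(\gamma)$ and using uniform contractivity of the fixed-point map is a clean way to get the $\|\cdot\|_\infty$ convergence, provided you make sure the $O(\gamma)$ bound is strong enough ($C^1$, say) to control the Hilbert distance $d_{\mc V_a}(\mc T_{noisy}\phi_0,\phi_0)$ and not just the sup-norm distance.

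There is, however, a concrete gap in part (i): the claim that ``on $B_\Delta$ the one-step transition density is bounded below on a fixed open set'' is false. For $x\in B_\Delta$ one has $F_i(x)\in[-\tfrac23\Delta,\tfrac23\Delta]$, and by (a.1) the one-step density in the $i$-th coordinate is $\tilde\xi(F_i(x)-\cdot)$, supported on $(F_i(x)-\Delta/3,\,F_i(x)+\Delta/3)$. Taking $x,x'\in B_\Delta$ with $F_i(x)$ near $\tfrac23\Delta$ and $F_i(x')$ near $-\tfrac23\Delta$ gives supports contained in $(\tfrac13\Delta,\Delta)$ and $(-\Delta,-\tfrac13\Delta)$ respectively, which are disjoint; the intersection of all these supports over $y\in[-\tfrac23\Delta,\tfrac23\Delta]$ is $(\tfrac13\Delta,-\tfrac13\Delta)=\emptyset$. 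So no one-step Doeblin minorization on $B_\Delta$ is available. The repair is to iterate: since $d_\T(F_i(x),0)\le\tfrac23 d_\T(x_i,0)$ and the noise has positive density near the image point, after $n$ steps with $(2/3)^n$ sufficiently small (the paper takes $(2/3)^n<1/6$) every orbit starting in $\T^N$ has, with probability bounded below, all coordinates in a fixed small interval around $0$, yielding an $n$-step minorization $P_N^n(x,\cdot)\ge\epsilon\,\Leb_{[-\Delta/6,\Delta/6]^N}(\cdot)$ uniformly in $x\in\T^N$ (combining your $\gamma^N$ entry bound with the contraction). With that modification your Doeblin argument, and the conclusion that the unique stationary measure is supported on the absorbing set $B_\Delta$, go through exactly as in the paper.
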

\begin{remark}
    Notice that the set $[-\Delta,\Delta]^N$ has exponentially small Lebesgue measure; it is therefore ``incompatible" with the stable fixed point for the STO found in point ii) that would prescribe that the coordinates $\{x_i(t)\}_{i\in\N}$ are roughly uniformly distributed.
\end{remark}
\begin{proof}[Proof of Proposition \ref{Prop:ExMetast}]
Point i) is a straightforward consequence of the assumptions and results on geometrically ergodic Markov chains (see e.g. \cite{MT}). Recall that $P_N(x,\cdot)$, where $x\in \T^N$,  is the transition probability of the full noisy coupled system. Denoting by $P^n_N$ the $n$-th fold convolution of the transition kernel, it is not hard to show that given assumptions a.1) and a.2) there exist $n\in \N$ and $\epsilon>0$ such that for every $x\in \T^N$
\[
 P^n_N(x,\cdot )\ge \epsilon \Leb_{[-\Delta/6,\Delta/6]^N}(\cdot)
\]
where $n>\frac{\log(1/6)}{\log(2/3)}$, so that $(2/3)^n<1/6$. This implies that the Markov chain is geometrically ergodic. That the unique stationary measure is supported on $[-\Delta,\Delta]^N$ follows from the fact that this set is forward invariant (from assumption a.1 on the kernel and \eqref{Eq:Cont}) and the fact that every random orbit enters this set with positive probability, and therefore a.e. orbit enters this set in finite time.

Point ii). We start by characterising the action of $M$ on densities with $\log$-Lipschitz densities. Take any $0<a'<a$ and $\phi\in \mc V_{a'}$. First of all notice that defining
\[
\tilde M\phi(x)=\int \tilde \xi(y)\phi(y-x)dy,
\]
 it is easy to check that $\tilde M\phi\in \mc V_{a'}$, and therefore $\tilde M\mc V_{a'}\subset \mc V_{a'}$. Analogously, one can show that $ \tilde M \mc C_\alpha\subset \mc C_\alpha$.
Now, by a.3, the operator
\[
( M\phi)(y)=\int \xi(x,y)\phi(x)dx
\]
satisfies
\begin{align*}
    \| M\phi-\tilde M \phi\|_{C^1} =\left\|\int \left(\xi_\cdot(y)-\tilde \xi(\cdot-y)\right)\phi(y)dy\right\|_{C^1}\le O(\gamma) \int\phi(y)dy.
\end{align*}
Since $ M\phi= \tilde M\phi +( M\phi-\tilde M\phi)$, applying Lemma \ref{Lem:StrictInclusion} we obtain that there is $\gamma$ small enough such that 
\[
 M\mc V_{a'}\subset \mc V_a.
\]
Analogously, using again a.3), for every $\alpha'<\alpha$, there is $\gamma$ small enough such that $ M\mc C_{\alpha'}\subset\mc C_\alpha$.

Now, since from the result on the deterministic part (Lemma \ref{Lem:Invariance1} and Proposition \ref{Prop:StabilityP} and their proofs) we know that for $\delta$ in the prescribed range 
\[
\mc T U_{a,\alpha}\subset U_{a',\alpha'}\quad\quad D\mc T|_{U_{a,\alpha}}(\mc V_a)\subset \mc V_{a'}
\]
for some $a'<a$ and $\alpha'<\alpha$, it follows that for the noisy STO, provided $\gamma$ is small enough
\[
\mc T_{noisy} U_{a,\alpha}= M\mc T U_{a,\alpha}\subset U_{a,\alpha}\quad\quad D\mc T_{noisy}|_{U_{a,\alpha}}(\mc V_a)= MD\mc T|_{U_{a,\alpha}}(\mc V_a)\subset \mc V_{a}
\]
Proposition \ref{Thm:OrderPreservationCondition} implies that  $\mc T_{noisy}$ is order preserving and, arguing as in the proof of Proposition \ref{Prop:StabilityP}, it has an attracting fixed point in $U_{a,\alpha}$.

That the density tends to the uniform one for $\gamma\rightarrow 0$, follows from the fact that the stable fixed point for $\tilde M\mc T$ with $\log$-Lipschitz  density is Lebesgue, and condition a.3) on the noise kernel and statistical stability for Markov chains imply that the fixed point for $ M\mc T$ must get arbitrarily close to Lebesgue when $\gamma\rightarrow 0$.
\end{proof}

\noindent \textbf{Acknowledgements}

 S.G. acknowledges the MIUR Excellence Department Project awarded to the
Department of Mathematics, University of Pisa, CUP I57G22000700001. S.G. was
partially supported by the research project "Stochastic properties of
dynamical systems" (PRIN 2022NTKXCX) of the Italian Ministry of Education
and Research. M.T. acknowledges Marie Slodowska-Curie Actions: "Ergodic
Theory of Complex Systems", project no. 843880. The authors acknowledge the UMI Group “DinAmicI” ({www.dinamici.org}) and the INdAM group GNFM.
\\
\textbf{Declarations}

The authors have no relevant financial or non-financial interests to disclose. 
Data sharing not applicable to this article as no datasets were generated or analysed during the current study.

\end{document}